\documentclass[intlimits,a4paper,10pt,reqno]{amsproc}
\usepackage{amsmath,color}
\usepackage{amssymb}
\usepackage{amsfonts}
\usepackage{amsxtra}
\usepackage{graphicx}
\usepackage{xspace}
\usepackage{multirow}
\usepackage{enumitem}
\usepackage[active]{srcltx}

\definecolor{rltred}{rgb}{0.75,0,0}
\definecolor{rltgreen}{rgb}{0,0.5,0}
\definecolor{rltblue}{rgb}{0,0,0.75}

\usepackage[%
   colorlinks=true,
   urlcolor=rltblue,       
   filecolor=rltgreen,     
   citecolor=rltgreen,     
   linkcolor=rltred,       
   bookmarks=true,
   unicode,
   plainpages=false,
   ]{hyperref}

\usepackage[operator,
rose,frak]
{paper_diening}
\usepackage{amsmath}
\usepackage{amssymb,amsthm,amsxtra,amscd}

\newcommand{\eps}{\epsilon}%

\newcommand{\td}{\partial_{\tau}}

\renewcommand{\O}{\Omega}

\newcommand{\eR}{{\mathbb R}}

\newcommand{\T}{\mathbf{S}}
\newcommand{\F}{\mathbf{F}}

\DeclareMathOperator{\spt}{spt}

\newcommand{\intO}{\int_\O}

\newcommand{\para}{{\delta}}
\newcommand{\letter}{\kappa}

\newcommand{\dist}{\operatorname{dist}}

\newcommand{\bue}{\bu_{\epsilon}}
\newcommand{\buen}{\bu_{\epsilon_n}}
\newcommand{\param}{\lambda}
\newtheorem{theorem}[equation]{Theorem}
\newtheorem{lemma}[equation]{Lemma}

\newtheorem{Proposition}[equation]{Proposition}
\newtheorem{definition}[equation]{Definition}

\newtheorem{remark}[equation]{Remark}

\numberwithin{equation}{section}

\newcommand{\otimess}{\overset{s}{\otimes}}

\newcommand{\trap}[1]{{#1}_{\tau}}
\newcommand{\difp}[1]{d^+{#1}}
\newcommand{\tran}[1]{{#1}_{-\tau}}
\newcommand{\difn}[1]{d^-{#1}}

\newcommand{\bou}{\partial\Omega}



\begin{document}

\title[Regularity for systems with symmetric gradients]{Global regularity for systems with $p$-structure depending on the
  symmetric gradient} \author{Luigi
  C. Berselli } \address{Luigi C. Berselli, Dipartimento di
  Matematica, Universit\`a di Pisa, Via F.~Buonarroti~1/c, I-56127
  Pisa, ITALY.}  \email{luigi.carlo.berselli@unipi.it} \author
{Michael R\r u\v zi\v cka{}} \address{Michael R\r u\v zi\v cka{},
  Institute of Applied Mathematics, Albert-Ludwigs-University
  Freiburg, Eckerstr.~1, D-79104 Freiburg, GERMANY.}
\email{rose@mathematik.uni-freiburg.de}

\begin{abstract}
  In this paper we study on smooth bounded domains the global
  regularity (up to the boundary) for weak solutions to systems having
  $p$-structure depending only on the symmetric part of the gradient.
  \\[3mm]
  \textbf{Keywords.} Regularity of weak solutions, symmetric gradient,
  boundary regularity, natural quantities.
  \\[3mm]
  \textbf{2000 MSC.} 76A05 (35D35 35Q35)
\end{abstract}
\maketitle
%
%
\section{Introduction}
\label{sec:intro}
In this paper we study regularity of weak solutions to the boundary
value problem 
\begin{equation}
  \label{eq}
  \begin{aligned}
    -\divo \bfS(\bfD\bu)&=\bff\qquad&&\text{in }\Omega,
    \\
    \bfu &= \bfzero &&\text{on } \partial \Omega,
  \end{aligned}
\end{equation}
where $\bfD\bu:=\tfrac 12(\nabla\bu+ \nabla\bu^\top)$ denotes the
symmetric part of the gradient $\nabla \bu$ and where\footnote{We
  restrict ourselves to the problem in three space dimensions, even if
  results can be easily transferred to the problem in $\setR^d$ for
  all $d\geq 2$.  } $\Omega\subset\setR^3$ is a bounded domain with a
$C^{2,1}$ boundary $\partial\Omega$.  Our interest in this system
comes from the $p$-Stokes system
\begin{equation}
  \label{stokes}
  \begin{aligned}
    -\divo \bfS(\bfD\bu) +\nabla \pi&=\bff\qquad&&\text{in }\Omega,
    \\
    \divo \bfu &= 0 &&\text{in } \Omega,
    \\
    \bfu &= \bfzero &&\text{on } \partial \Omega.
  \end{aligned}
\end{equation}
In both problems the typical example for $\bS$ we have in
mind is
\begin{equation*}
  \bfS(\bD \bu) = 
  \mu (\delta+\abs{\bD \bu})^{p-2}\bD\bu\,, 
\end{equation*}
where $p \in\, (1,2]$, $\delta\geq 0$, and $\mu>0$.  In previous
investigations of~\eqref{stokes} only suboptimal results for the
regularity up to the boundary have been proved. Here we mean
suboptimal in the sense that the results are weaker than the results
known for $p$-Laplacian systems,
cf.~\cite{acerbi-fusco,gia-mod-86,giu1}.  Clearly, the
system~\eqref{eq} is obtained from~\eqref{stokes} by dropping the
divergence constraint and the resulting pressure gradient. Thus the
system~\eqref{eq} lies in between system~\eqref{stokes} and
$p$-Laplacian systems, which depend on the full gradient $\nabla \bu$.

We would like to stress that the system~\eqref{eq} is of own
independent interest, since it is studied within plasticity theory, when 
formulated in the framework of deformation theory
(cf.~\cite{fuse,SS00}).  In this context the unknown is the
displacement vector field $\bu=(u^1,u^2,u^3)^\top$, while the external
body force $\bff=(f^1,f^2,f^3)^\top$ is given. The stress tensor
$\bfS$, which is the tensor of small elasto-plastic deformations,
depends only on $\bfD\bu$. Physical interpretation and discussion of
both systems~\eqref{eq} and~\eqref{stokes} and the underlying models
can be found, e.g., in~\cite{bird,fuse,kach,ma-ra-model,NH80}.

We study global regularity properties of weak solutions to~\eqref{eq}
in sufficiently smooth and bounded domains $\Omega$; we obtain for all
$p \in (1,2]$ the optimal result, namely that $\bF(\bD \bu)$ belongs
to $ W^{1,2}(\Omega)$, where the nonlinear tensor-valued function
$\bF$ is defined in~\eqref{eq:def_F}. This result has been proved near
a flat boundary in~\cite{SS00} and is the same result as for
$p$-Laplacian systems (cf.~\cite{acerbi-fusco,gia-mod-86,giu1}).  The
situation is quite different for~\eqref{stokes}. There the optimal
result, i.e.~$\bF(\bD \bu) \in W^{1,2}(\Omega)$, is only known for
{\it(i)} two-dimensional bounded domains (cf.~\cite{KMS2} where even
the $p$-Navier-Stokes system is treated), {\it(ii)} the space-periodic
problem in $\setR^d$, $d\ge2$, which follows immediately from interior
estimates, i.e.~$\bF(\bD \bu) \in W^{1,2}_\loc (\Omega)$, which are
known in all dimensions and the periodicity of the solution, 
{\it(iii)} if the no-slip boundary condition is replaced by perfect
slip boundary conditions (cf.~\cite{KT13}), and {\it(iv)} in the case of small $\bff$
(cf.~\cite{CG2008}). We also observe that the above results for the
$p$-Stokes system (apart
those in the space periodic setting) require the
stress tensor to be non-degenerate, that is $\delta>0$.
In the case of homogeneous Dirichlet boundary conditions and three- and higher-dimensional
bounded, sufficiently smooth domains only suboptimal results are
known. To our knowledge the state of the art for general data is that
$\bF(\bD \bu) \in W^{1,2}_\loc (\Omega)$, tangential derivatives of
$\bF(\bD\bu)$ near the boundary belong to $L^2$, while the normal
derivative of $\bF(\bD\bu)$ near the boundary belongs to some $L^{q}$,
where $q=q(p)<2$ (cf.~\cite{hugo-petr-rose,br-reg-shearthin} and the
discussion therein). We would also like to mention a result for
another system between~\eqref{stokes} and $p$-Laplacian system, namely
if \eqref{stokes} is considered with $\bS$ depending on the full velocity
gradient $\nabla \bu$. In this case it is proved in \cite{CM15} that
$\bu \in W^{2,r}(\setR^3)\cap W^{1,p}_0(\setR^3)$ for some $r>3$,
provided $p<2$ is very close to $2$.

In the present paper we extend to the general case of bounded sufficiently smooth
domains and to possibly degenerate stress tensors, that is the case $\delta=0$, the optimal regularity result
for~\eqref{eq} of Seregin and Shilkin~\cite{SS00} in the case of a
flat boundary.  
The precise result we prove is the following:
%
\begin{theorem}
  \label{thm:MT}
  Let the tensor field $\T$ in~\eqref{eq} have
  $(p,\delta)$-structure for some $p \in (1,2]$, and
  $\delta \in [0,\infty)$, and let $\bF$ be the associated tensor
  field to $\bS$. Let $\O\subset\eR^3$ be a bounded domain with
  $C^{2,1}$ boundary and let $\ff\in L^{p'}(\O)$. Then, the unique
  weak solution $\bu\in W^{1,p}_{0}(\Omega)$ of the
  problem~\eqref{eq} satisfies
  \begin{equation*}
    \begin{aligned}
      \intO   \abs{\nabla \F(\bD\bu)}^2\,d\bx
      &\le c \,, 
    \end{aligned}
  \end{equation*}  
  where $c$ denotes a positive function which is non-decreasing in
  $\|\ff\|_{p'}$ and $\delta$, and which depends on 
  the domain through its measure $\abs{\Omega}$ and the $C^{2,1}$-norms of the local description of
  $\partial \Omega$.  In particular, the above estimate implies that
  $\bu\in W^{2,\frac{3p}{p+1}}(\Omega).  $
\end{theorem}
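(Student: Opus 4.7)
The plan is to adapt the classical Nirenberg difference-quotient scheme to the nonlinear $(p,\delta)$-structure, using a finite covering of $\overline\O$ and treating interior patches and boundary patches separately. First I would regularise by replacing $\T$ with a strictly monotone smooth approximation $\T^\varepsilon$ (for instance by enlarging the shift $\delta$ to $\delta+\varepsilon$, or by adding a small quadratic term) to obtain more regular approximate solutions $\bu^\varepsilon$ on which the Nirenberg test-function construction is rigorously justifiable. All the estimates produced below will be $\varepsilon$-uniform and can be transferred in the limit to the original $\bu$ using the uniqueness of weak solutions for strictly monotone operators together with weak lower semicontinuity of the norms involved.

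Next I would localise via a finite partition of unity subordinate to a cover of $\overline\O$ by balls. On interior balls the standard Nirenberg difference quotient in all three coordinate directions, tested against $\Delta_{-h}(\eta^2\Delta_h\bu^\varepsilon)$, produces the natural interior estimate $\F(\bD\bu^\varepsilon)\in W^{1,2}_{\mathrm{loc}}(\O)$ uniformly in $\varepsilon$, by the monotonicity and shifted Young-type inequalities associated with the $(p,\delta)$-structure. For boundary balls I flatten $\partial\O$ with a $C^{2,1}$-chart, reducing to a half-ball $B^+$. Only tangential difference quotients $\Delta_h^\tau$ ($\tau\in\{1,2\}$) preserve the homogeneous Dirichlet condition, so only tangential derivatives of $\F(\bD\bu^\varepsilon)$ can be extracted by the same Nirenberg argument; the curvature of the chart generates lower-order commutator terms controlled by the $C^{2,1}$-norm of the chart and absorbed using the global $W^{1,p}$-energy bound obtained by testing the equation with $\bu^\varepsilon$ itself.

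The main obstacle is the normal direction at the boundary, and it is precisely here that the absence of a divergence constraint and of a pressure separates our case from the $p$-Stokes situation. With the tangential derivatives of $\T(\bD\bu^\varepsilon)$ already under control, the component equations $-\partial_i\T_{ij}(\bD\bu^\varepsilon)=f_j$ allow one to read off $\partial_3\T_{3j}$ algebraically in terms of $\ff$ and tangential stress derivatives. Exploiting the symmetry of $\bD\bu$, the identity $\partial_i\partial_j u_k=\partial_i D_{jk}+\partial_j D_{ik}-\partial_k D_{ij}$, and the strict monotonicity of $\T$ via the pointwise equivalence $|\nabla\F(\bD\bu)|^2\sim(\delta+|\bD\bu|)^{p-2}|\nabla\bD\bu|^2$, this partial information on $\partial_3\T$ bootstraps to an $L^2$-bound on the full $\nabla\F(\bD\bu^\varepsilon)$. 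Carrying this algebra through on a curved chart, rather than on the flat half-space treated in~\cite{SS00}, is the point I expect to require the most care, since each chart contributes non-constant coefficients and extra terms that must be shown to be of strictly lower order.

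Summing the local estimates over the finite cover yields the global $L^2$-bound on $\nabla\F(\bD\bu^\varepsilon)$ uniformly in $\varepsilon$, and passing $\varepsilon\to 0$ gives the stated estimate for $\bu$. For the embedding $\bu\in W^{2,\frac{3p}{p+1}}(\O)$, the identity above reduces matters to bounding $\nabla\bD\bu$ in $L^{3p/(p+1)}$; combining the $W^{1,2}$-bound on $\F(\bD\bu)$ with the Sobolev embedding $W^{1,2}(\O)\hookrightarrow L^6(\O)$ gives $\bD\bu\in L^{3p}(\O)$, and then the factorisation $|\nabla\bD\bu|=(\delta+|\bD\bu|)^{(p-2)/2}|\nabla\bD\bu|\cdot(\delta+|\bD\bu|)^{(2-p)/2}$ combined with H\"older's inequality (with conjugate exponents $2$ and $2(p+1)/(2-p)$) yields the desired $L^{3p/(p+1)}$-bound and hence the stated $W^{2,3p/(p+1)}$-regularity.
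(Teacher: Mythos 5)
Your overall architecture (regularisation, localisation, interior and tangential difference quotients, recovery of the normal direction from the equation, limit passage, and the final H\"older/Sobolev argument for $W^{2,3p/(p+1)}$) coincides with the paper's, and your concluding embedding argument is correct. The gap is in the claim that the algebraic recovery of $\partial_3 S_{3j}$ from the component equations ``bootstraps to an $L^2$-bound on the full $\nabla\bF(\bD\bu_\varepsilon)$''. Carried out pointwise, that recovery gives (after inverting the coercive $3\times 3$ system for $\partial_3^2u^i_\varepsilon$ and absorbing the curvature terms)
\begin{equation*}
\bigl(\varepsilon+\phi''(|\bD\bu_\varepsilon|)\bigr)\,|\nabla^2\bu_\varepsilon|
\le c\Bigl(|\bff|+\bigl(\varepsilon+\phi''(|\bD\bu_\varepsilon|)\bigr)\,|\partial_\tau\nabla\bu_\varepsilon|\Bigr),
\end{equation*}
and squaring to reach $|\partial_3\bF(\bD\bu_\varepsilon)|^2\sim\phi''(|\bD\bu_\varepsilon|)|\partial_3\bD\bu_\varepsilon|^2$ produces the term $\phi''(|\bD\bu_\varepsilon|)\,|\partial_\tau\nabla\bu_\varepsilon|^2$ (or, with the quadratic regularisation, $\varepsilon^2\phi''(|\bD\bu_\varepsilon|)^{-1}|\partial_\tau\nabla\bu_\varepsilon|^2$). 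Neither is controlled by the tangential estimates: those bound $\phi''(|\bD\bu_\varepsilon|)|\partial_\tau\bD\bu_\varepsilon|^2\sim|\partial_\tau\bF(\bD\bu_\varepsilon)|^2$, i.e.\ the symmetric part only, and there is no pointwise Korn inequality to pass from $\bD$ to $\nabla$. In the paper this pointwise route is used only to obtain a bound depending on $\varepsilon^{-1}$ and $\delta^{-1}$, whose sole purpose is the qualitative statement $\bu_\varepsilon\in W^{2,2}(\Omega)$ needed to justify the subsequent computations.

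The uniform normal estimate requires a genuinely different, integral argument (the ``main technical step'' of the paper, adapted from Seregin--Shilkin): one estimates $\int\xi^2\mathcal P_3(\bD\bu_\varepsilon)\,d\bx$ with $\mathcal P_3=\partial_3\bS\cdot\partial_3\bD\bu_\varepsilon$, exploits the equivalences $\mathcal P_i\sim|\partial_i\bF|^2\sim|\partial_i\bS|^2/\phi''$ to run Young's inequality, converts mixed second derivatives via $\partial_j\partial_ku^i=\partial_jD_{ik}+\partial_kD_{ij}-\partial_iD_{jk}$, and, crucially, handles the one irreducible bad term $\int\xi^2\partial_3S_{\alpha\beta}(\bD\bu_\varepsilon)\,\partial_\alpha\partial_{\tau_\beta}u^3_\varepsilon\,d\bx$ by integrating by parts twice so that it becomes $\int\xi^2\partial_\alpha S_{\alpha\beta}(\bD\bu_\varepsilon)\,\partial_{\tau_\beta}D_{33}\bu_\varepsilon\,d\bx$ plus commutators: a tangential derivative of $\bS$ against a tangential derivative of $\bD\bu_\varepsilon$, both admissible. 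This device is needed even for a flat boundary, so the difficulty is not, as you suggest, mainly the curved-chart bookkeeping; without it the normal step does not close uniformly in the regularisation parameter. A secondary omission: for $\delta=0$ the equivalence constants degenerate, so the paper inserts a second approximation layer (tensors with $(p,\kappa)$-structure, $\kappa>0$, with $\kappa$-uniform estimates); your shift regularisation could play this role only if you verify that every constant is non-decreasing in the shift.
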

%
%
\section{Preliminaries and main results}
\label{sec:prep} 
In this section we introduce the notation we will use, state the precise assumptions on
the extra stress tensor $\bS$, and formulate the main results of the paper.
\subsection{Notation}
We use $c, C$ to denote generic constants, which may change from line
to line, but are independent of the crucial quantities. Moreover, we
write $f\sim g$ if and only if there exists constants $c,C>0$ such
that $c\, f \le g\le C\, f$. In some cases we need to specify the
dependence on certain parameters, and consequently we denote by
$c(\,.\,)$ a positive function which is non-decreasing with respect to
all its arguments.

We use standard Lebesgue spaces $(L^p(\Omega),\,\|\,.\,\|_{p})$ and
Sobolev spaces $(W^{k,p}(\Omega),$ $ \,\|\,.\,\|_{k,p})$, where
$\Omega \subset \setR^3$, is a sufficiently smooth bounded domain. The
space $W^{1,p}_0(\Omega)$ is the closure of the compactly supported,
smooth functions $C^\infty_0 (\Omega)$ in $W^{1,p}(\Omega)$. Thanks to
the \Poincare{} inequality we equip $W^{1,p}_0(\Omega)$ with the
gradient norm $\norm{\nabla\, \cdot\,}_p$.  When dealing with
functions defined only on some open subset $G\subset \Omega$, we
denote the norm in $L^p(G)$ by $\|\,.\,\|_{p,G}$. As usual
we use the symbol $\rightharpoonup$ to denote weak convergence, and
$\rightarrow$ to denote strong convergence. The symbol $\spt f$
denotes the support of the function $f$. We do not distinguish between
scalar, vector-valued or tensor-valued function spaces. However, we
denote vectors by boldface lower-case letter as e.g.~$\bu$ and tensors
by boldface upper case letters as e.g.~$\bS$. For vectors
$\bu, \bv \in \eR^3$ we denote
$\bu\otimess \bv:=\frac 12 (\bu\otimes \bv + (\bu\otimes \bv)^\top)$,
where the standard tensor product $\bu\otimes \bv\in \eR^{3 \times 3}$
is defined as $(\bu\otimes \bv)_{ij}:=u_iv_j$.  The scalar product of
vectors is denoted by $\bu\cdot \bv= \sum_{i=1}^3u_i v_i$ and the
scalar product of tensors is denoted by
$\bA\cdot\bB:=\sum_{i,j=1}^3A_{i j}B_{i j}$.

Greek lower-case letters take only values $1, 2$, while Latin
lower-case ones take the values $1, 2, 3$. We use the summation
convention over repeated indices only for Greek lower-case letters,
but not for Latin lower-case ones.

\subsection{$(p,\delta)$-structure}
\label{sec:stress_tensor}
We now define what it means that a tensor field $\bS$ has
$(p,\delta)$-structure, see~\cite{die-ett, dr-nafsa}. For a tensor
$\bfP \in \setR^{3 \times 3} $ we denote its symmetric part by
$\bP^\sym:= \frac 12 (\bP +\bP^\top) \in \setR_\sym ^{3 \times 3}:=
\set {\bfP \in \setR^{3 \times 3} \,|\, \bP =\bP^\top}$. 
We use the notation $\abs{\bP}^2=\bP \cdot \bP $.

It is convenient to define for $t\geq0 $ a special
N-function\footnote{For the general theory of N-functions and Orlicz
  spaces we refer to \cite{ren-rao}. }
$\phi(\cdot)=\phi_{p,\delta}(\cdot)$, for $p \in (1,\infty)$, $\delta\ge
0$, by
\begin{equation} 
  \label{eq:5}
  \varphi(t):= \int _0^t (\delta +s)^{p-2} s  \, ds\,.
\end{equation}
The function $\phi$ satisfies, uniformly in $t$ and independently of
$\delta$, the important equivalence
\begin{align}
  \label{eq:equi1}
  \phi''(t)\, t &\sim \phi'(t)\,,
  \\
  \label{eq:equi2}
  \phi'(t)\, t &\sim \phi(t)\,,
  \\
  t^p+\delta^p &\sim \phi(t) +\delta^p\,. \label {eq:equi3}
\end{align}
We use the convention that if $\phi''(0)$ does not exist, the
left-hand side in~\eqref{eq:equi1} is continuously extended by zero
for $t= 0$.  We define the {\rm shifted
  N-functions} $\set{\phi_a}_{a \ge 0}$,
cf. ~\cite{die-ett,die-kreu,dr-nafsa}, for $t\geq0$ by
\begin{equation*}
  \phi_a(t):= \int _0^t \frac { \phi'(a+s)\,s}{a+s}\, ds  
\end{equation*}
Note that the family $\set{\phi_a}_{a \ge 0}$ satisfies the $\Delta_2$-condition
uniformly with respect to ${a \ge 0}$, i.e.~$\phi_a(2t) \leq c(p)
\phi_a(t)$ holds for all $t\ge 0$.
\begin{definition}[$(p,\delta)$-structure]
\label{ass:1}
  We say that a tensor field $\bS\colon \setR^{3 \times 3} \to
  \setR^{3 \times 3}_\sym $ belonging to $C^0(\setR^{3 \times
    3},\setR^{3 \times 3}_\sym )\cap C^1(\setR^{3 \times 3}\setminus
  \{\bfzero\}, \setR^{3 \times 3}_\sym ) $, satisfying $\bS(\bP) =
  \bS\big (\bP^\sym \big )$, and $\bS(\mathbf 0)=\mathbf 0$ possesses
  {\rm $(p,\para)$-structure}, if for some $p \in (1, \infty)$,
  $\para\in [0,\infty)$, and the N-function
  $\varphi=\varphi_{p,\delta}$ (cf.~\eqref{eq:5}) there exist
  constants $\kappa_0, \kappa_1 >0$ such that
   \begin{equation}
     \label{eq:ass_S}
     \begin{aligned}
       \sum\limits_{i,j,k,l=1}^3 \partial_{kl} S_{ij} (\bP)
       Q_{ij}Q_{kl} &\ge \kappa_0
       \,\phi''(|\bP^\sym|)|\bQ^\sym|^2\,,
       \\[-2mm]
       \big |\partial_{kl} S_{ij}({\bP})\big | &\le \kappa_1 \,\phi''
       (|\bP^\sym|)
     \end{aligned}
   \end{equation} 
   are satisfied for all $\bP,\bQ \in \setR^{3\times 3} $ with
   $\bP^\sym \neq \bfzero$ and all $i,j,k,l=1,2,3$.  The constants
   $\kappa_0$, $\kappa_1$, and $p$ are called the {\em
     characteristics} of $\bfS$.
\end{definition}
\begin{remark}
  \label{rem:delta0}
  {\rm (i) Assume that $\bS$ has $(p,\delta)$-structure for some
    $\delta \in [0,\delta_0]$. Then, if not otherwise stated, the
    constants in the estimates depend only on the characteristics of
    $\bfS$ and on $\delta_0$, but are independent of $\delta$. 
 
    (ii) An important example of a tensor field $\bfS$ having
    $(p,\delta)$-structure is given by $ \bfS(\bfP) =
    \phi'(\abs{\bfP^\sym})\abs{\bfP^\sym}^{-1} \bfP^\sym$.  In this
    case the characteristics of $\bfS$, namely $\kappa_0$ and
    $\kappa_1$, depend only on $p$ and are independent of $\delta \geq
    0$.  

    (iii) For a tensor field $\bS$ with $(p,\delta)$-structure we have
    $\partial_{kl} S_{ij} (\bP)= \partial_{kl} S_{ji} (\bP)$, for all
    $i,j,k,l =1,2,3$ and all $\bP \in \setR^{3\times 3}$, due to its
    symmetry. Moreover, from \linebreak $\bS(\bP) = \bS\big (\bP^\sym \big )$
    follows
    $\partial_{kl} S_{ij} (\bP)= \frac 12 \partial_{kl} S_{ij}
    (\bP^\sym) + \frac 12 \partial_{lk} S_{ij} (\bP^\sym)$, for all
    $i,j,k,l =1,2,3$ and all $\bP \in \setR^{3\times 3}$, and
    consequently $\partial_{kl} S_{ij} (\bP)= \partial_{lk} S_{ij}(\bP)$
    for all $i,j,k,l =1,2,3$ and all $\bP \in \setR_\sym^{3\times
      3}$.}
\end{remark}
To a tensor field $\bS$ with $(p,\delta)$-structure we associate
the tensor field $\bF\colon\setR^{3 \times 3} \to \setR^{3 \times
  3}_\sym$ defined through
\begin{align}
  \label{eq:def_F}
  \bF(\bP):= \big (\para+\abs{\bP^\sym} \big )^{\frac
    {p-2}{2}}{\bP^\sym } \,.
\end{align}
The connection between $\bfS$, $\bfF$, and $\set{\phi_a}_{a \geq 0}$
is best explained by the following proposition
(cf.~\cite{die-ett}, \cite{dr-nafsa}). 
\begin{Proposition}
  \label{lem:hammer}
  Let $\bfS$ have $(p,\delta)$-structure, and let $\bfF$ be defined
  in~\eqref{eq:def_F}.  Then
  \begin{subequations}
    \label{eq:hammer}
    \begin{align}
      \label{eq:hammera}
      \big({\bfS}(\bfP) - {\bfS}(\bfQ)\big) \cdot \big(\bfP-\bfQ \big)
      &\sim \bigabs{ \bfF(\bfP) - \bfF(\bfQ)}^2\,,
      \\
      &\sim \phi_{\abs{\bfP^\sym}}(\abs{\bfP^\sym - \bfQ^\sym})\,,
      \label{eq:hammerb}
      \\
      \label{eq:hammerc}
      &\sim \phi''\big( \abs{\bfP^\sym} + \abs{\bfP^\sym - \bfQ^\sym}
      \big)\abs{\bfP^\sym - \bfQ^\sym}^2\,,
      \\
      \abs{\bfS(\bfP) - \bfS(\bfQ)} &\sim
      \phi'_{\abs{\bfP^\sym}}\big(\abs{\bfP^\sym -
        \bfQ^\sym}\big)\,,  \label{eq:hammere}  
      \intertext{uniformly in $\bfP, \bfQ \in \setR^{3 \times 3}$.
        Moreover,  uniformly in $\bfQ \in \setR^{3 \times 3}$,} 
      \label{eq:hammerd}
      \bfS(\bfQ) \cdot \bfQ &\sim \abs{\bfF(\bfQ)}^2 \sim
      \phi(\abs{\bfQ^\sym}).
    \end{align}
  \end{subequations}
  The constants depend only on the characteristics of $\bfS$.
\end{Proposition}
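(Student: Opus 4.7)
The plan is to reduce to the symmetric case, express the differences $\bfS(\bfP) - \bfS(\bfQ)$ and $\bfF(\bfP) - \bfF(\bfQ)$ as integrals of derivatives via the fundamental theorem of calculus, and then pass everything through one central integral equivalence. Since $\bfS(\bfP) = \bfS(\bfP^\sym)$ and $\bfF(\bfP) = \bfF(\bfP^\sym)$, and by the symmetry noted in Remark~\ref{rem:delta0}(iii), I assume throughout that $\bfP, \bfQ \in \setR^{3\times 3}_\sym$ and set $\bfP_s := \bfQ + s(\bfP - \bfQ)$ for $s \in [0,1]$. The FTC identity
\begin{equation*}
(\bfS(\bfP) - \bfS(\bfQ)) \cdot (\bfP - \bfQ) = \int_0^1 \sum_{i,j,k,l=1}^3 \partial_{kl} S_{ij}(\bfP_s)\,(P_{kl} - Q_{kl})(P_{ij} - Q_{ij})\, ds
\end{equation*}
combined with the two-sided structure bound~\eqref{eq:ass_S} then yields
\begin{equation*}
(\bfS(\bfP) - \bfS(\bfQ)) \cdot (\bfP - \bfQ) \sim \abs{\bfP - \bfQ}^2 \int_0^1 \phi''(\abs{\bfP_s})\, ds,
\end{equation*}
with constants depending only on $\kappa_0$ and $\kappa_1$.

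The key algebraic lemma is the pointwise equivalence
\begin{equation*}
\int_0^1 \phi''(\abs{\bfP_s})\, ds \sim \phi''\bigl(\abs{\bfP} + \abs{\bfP - \bfQ}\bigr),
\end{equation*}
from which~\eqref{eq:hammerc} follows immediately. By~\eqref{eq:equi1}-\eqref{eq:equi2} one has $\phi''(t) \sim (\delta+t)^{p-2}$, which for $p \le 2$ is essentially monotone decreasing in $t$; the upper bound then follows from the triangle inequality $\abs{\bfP_s} \le \abs{\bfP} + \abs{\bfP-\bfQ}$, and the lower bound by restricting $s$ to a subinterval of $[0,1]$ of positive measure on which $\abs{\bfP_s}$ is comparable to $\abs{\bfP} + \abs{\bfP-\bfQ}$, splitting into the regimes $\abs{\bfP-\bfQ} \lesssim \abs{\bfP}$ and $\abs{\bfP-\bfQ} \gtrsim \abs{\bfP}$. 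To transfer this into the shifted N-function language needed for~\eqref{eq:hammerb} and~\eqref{eq:hammere}, I rely on the identities
\begin{equation*}
\phi_a(t) \sim \phi''(a+t)\, t^2, \qquad \phi'_a(t) \sim \phi''(a+t)\, t,
\end{equation*}
valid for all $a, t \ge 0$, which are direct consequences of the definition of $\phi_a$ together with~\eqref{eq:equi1}-\eqref{eq:equi2} and the uniform $\Delta_2$-property mentioned after~\eqref{eq:5}.

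For~\eqref{eq:hammera} I would compute $\partial_{kl} F_{ij}(\bfA)$ directly from $\bfF(\bfA) = (\delta+\abs{\bfA})^{(p-2)/2}\bfA$, obtaining the two-sided pointwise estimate $\abs{D\bfF(\bfA)}^2 \sim \phi''(\abs{\bfA})$ on $\setR^{3\times 3}_\sym$, and then repeat the FTC calculation to get $\abs{\bfF(\bfP) - \bfF(\bfQ)}^2 \sim \abs{\bfP-\bfQ}^2 \int_0^1 \phi''(\abs{\bfP_s})\, ds$; combined with the previous paragraph, this closes the chain (a)-(b)-(c). The upper bound in~\eqref{eq:hammere} follows by applying the upper bound of~\eqref{eq:ass_S} to the FTC expansion of $\bfS(\bfP) - \bfS(\bfQ)$ and rewriting $\phi''(\abs{\bfP}+\abs{\bfP-\bfQ})\abs{\bfP-\bfQ} \sim \phi'_{\abs{\bfP}}(\abs{\bfP-\bfQ})$; the matching lower bound is obtained by combining the Cauchy-Schwarz inequality $\abs{\bfS(\bfP) - \bfS(\bfQ)}\cdot\abs{\bfP-\bfQ} \ge (\bfS(\bfP) - \bfS(\bfQ))\cdot(\bfP-\bfQ)$ with~\eqref{eq:hammerc}. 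Finally, setting $\bfQ = \bfzero$ in~\eqref{eq:hammera}-\eqref{eq:hammerc} together with $\phi''(t)\, t^2 \sim \phi(t)$ (itself a consequence of~\eqref{eq:equi1}-\eqref{eq:equi2}) yields~\eqref{eq:hammerd}.

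The hard part is technical: when $\delta = 0$ the derivatives $\partial \bfS$ and $\partial \bfF$ are singular at $\bfzero$, and $\phi''(\abs{\bfP_s})$ may fail to be integrable if the segment from $\bfQ$ to $\bfP$ passes through the origin. I would handle this by first proving the proposition under $\delta > 0$, where everything is $C^1$ on all of $\setR^{3\times 3}$, and then passing to the limit $\delta \searrow 0$; by Remark~\ref{rem:delta0}(i) the constants are uniform in $\delta \in [0, \delta_0]$, so the pointwise equivalences persist in the limit. Alternatively, when both $\bfP$ and $\bfQ$ are nonzero the segment hits $\bfzero$ in at most one point, so the integrals exist as improper integrals and the monotonicity-based case analysis above applies verbatim.
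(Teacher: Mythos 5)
You are proving a statement the paper itself does not prove: Proposition~\ref{lem:hammer} is quoted from \cite{die-ett,dr-nafsa}, and your architecture (reduction to symmetric arguments, fundamental theorem of calculus along $\bfP_s:=\bfQ+s(\bfP-\bfQ)$, a key equivalence for $\int_0^1\phi''(|\bfP_s|)\,ds$, shifted-N-function identities) is indeed the standard route used there. However, your proof of the key integral equivalence has a genuine gap, and it sits exactly at the heart of the matter. For $1<p\le 2$ the function $\phi''(t)\sim(\delta+t)^{p-2}$ is decreasing, so the triangle inequality $|\bfP_s|\le|\bfP|+|\bfP-\bfQ|$ gives $\phi''(|\bfP_s|)\ge\phi''(|\bfP|+|\bfP-\bfQ|)$, i.e.\ the \emph{lower} bound of the integral, and restricting $s$ to a subinterval on which $|\bfP_s|\sim|\bfP|+|\bfP-\bfQ|$ again only produces a lower bound; both of your arguments point the same way, and the direction you label "upper bound" is not proved. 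The missing direction, $\int_0^1\phi''(|\bfP_s|)\,ds\le C\,\phi''(|\bfP|+|\bfP-\bfQ|)$, is easy only when $|\bfP-\bfQ|\lesssim|\bfP|$; in the complementary regime the segment may pass arbitrarily close to (or through) $\bfzero$ and the integrand is singular, and one needs the quantitative estimate $|\bfP_s|\ge|s-s^*|\,|\bfP-\bfQ|$ (with $s^*$ the parameter of the point of the line nearest the origin) together with $\int_0^1(\delta+t\,|\bfP-\bfQ|)^{p-2}\,dt\le \tfrac{c}{p-1}\,(\delta+|\bfP-\bfQ|)^{p-2}$; this is precisely where $p>1$ enters and where the constant degenerates as $p\downarrow 1$. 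Neither of your proposed repairs supplies this: proving the lemma first for $\delta>0$ and letting $\delta\searrow0$ is circular, since the $\delta$-uniformity of the constants is exactly what must be established (Remark~\ref{rem:delta0}(i) is a bookkeeping convention, not a theorem, and the crude bound $\phi''\le c\,\delta^{p-2}$ blows up), while the remark that the improper integral exists when the segment meets $\bfzero$ gives no quantitative comparison with $\phi''(|\bfP|+|\bfP-\bfQ|)$ at all.

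A second, smaller point: for~\eqref{eq:hammera} the two-sided pointwise bound $|D\bfF(\bfA)|^2\sim\phi''(|\bfA|)$ is not sufficient to conclude $|\bfF(\bfP)-\bfF(\bfQ)|^2\gtrsim|\bfP-\bfQ|^2\int_0^1\phi''(|\bfP_s|)\,ds$, because the integral $\int_0^1 D\bfF(\bfP_s)[\bfP-\bfQ]\,ds$ could in principle suffer cancellation; you need the quadratic-form coercivity of $D\bfF$ and the pairing with $\bfP-\bfQ$ (the same Cauchy--Schwarz trick you correctly use for the lower bound in~\eqref{eq:hammere}), together with the key lemma applied also to the exponent $(p-2)/2$. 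With the upper-bound argument for the segment integral supplied and this coercivity step made explicit, your outline closes and reproduces the cited proof; as written, the central estimate is asserted rather than proved.
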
 
For a detailed discussion of the properties of $\T$ and $\F$ and their
relation to Orlicz spaces and N-functions we refer the reader
to~\cite{dr-nafsa,bdr-7-5}. Since in the following we shall insert into $\T $ and $\F$ only
symmetric tensors, we can drop in the above formulas the superscript
``$^\sym $'' and restrict the admitted tensors to symmetric ones.
%
%

We recall that the following equivalence, which is proved
in~\cite[Lemma~3.8]{bdr-7-5}, 
\begin{equation}
  \label{eq:grad}
  |\partial_i\bF(\bQ)|^2\sim\phi''(|\bQ|)|\partial_i\bQ|^2, 
\end{equation}
valid for all smooth enough symmetric tensor fields
$\bQ\in \setR_\sym ^{3 \times 3}$. The proof of this equivalence is
based on Proposition~\ref{lem:hammer}. This Proposition and the theory
of divided differences also imply (cf.~\cite[(2.26)]{br-reg-shearthin})
that 
\begin{equation}
  \label{eq:tang}
  |\partial_\tau \bF(\bQ)|^2\sim\phi''(|\bQ|)|\partial_\tau\bQ|^2
\end{equation}
for all smooth enough symmetric tensor fields
$\bQ\in \setR_\sym ^{3 \times 3}$.

A crucial observation in~\cite{SS00} is that the quantities
in~\eqref{eq:grad} are also equivalent to several further
quantities. To formulate this precisely we introduce for $i=1,2,3$ and
for sufficiently smooth symmetric tensor fields $\bQ$ the quantity
\begin{equation}
  \label{eq:P}
  \mathcal{P}_i(\bQ):=\partial_i\bS(\bQ)\cdot\partial_i\bQ =
  \sum_{k,l,m,n=1}^3\partial_{kl}S_{mn}(\bQ) \,\partial_i Q_{kl}\,\partial_i Q_{mn}\,.
\end{equation}
Recall, that in the definition of $\mathcal{P}_i(\bQ)$ there is no
summation convention over the repeated Latin lower-case index $i$ in
$\partial_i\bS(\bQ)\cdot\partial_i\bQ$. Note that if $\bS$ has
$(p,\delta)$-structure, then $ \mathcal{P}_i(\bv)\geq0$, for
$i=1,2,3$.
%
%
There hold the following important equivalences, first proved in~\cite{SS00}:
\begin{Proposition}
\label{prop:equivalence}
Assume that $\bS $ has $(p,\delta)$-structure. Then the following equivalences are valid,
for all smooth enough symmetric tensor fields $\bQ$ and all $i=1,2,3$
  \begin{align}
    &\mathcal{P}_i(\bQ)\sim\phi''(|\bQ|)|\partial_i
    \bQ|^2\sim|\partial_i \bF(\bQ)|^2\,,\label{eq:p-F}
    \\
    &\mathcal{P}_i(\bQ)\sim\frac{|\partial_i\bS(\bQ)|^2}{\phi''(|\bQ|)}\,,\label{eq:p-S}
\end{align}
with constants only depending on the characteristics of $\bS$.
\end{Proposition}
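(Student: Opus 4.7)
The second $\sim$ in \eqref{eq:p-F}, namely $\phi''(|\bQ|)|\partial_i\bQ|^2 \sim |\partial_i\bF(\bQ)|^2$, is precisely \eqref{eq:grad}, so it suffices to establish the two remaining equivalences $\mathcal{P}_i(\bQ) \sim \phi''(|\bQ|)|\partial_i\bQ|^2$ and $\mathcal{P}_i(\bQ)\sim|\partial_i\bS(\bQ)|^2/\phi''(|\bQ|)$. I would argue pointwise at points where $\bQ(\bx)\neq\mathbf 0$, so that $\phi''(|\bQ|)>0$ and $\partial_{kl} S_{mn}(\bQ)$ is well defined; the zero set of $\bQ$ is handled by the convention recalled below \eqref{eq:equi1}, since on it all quantities in the statement vanish.

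For $\mathcal{P}_i(\bQ)\sim \phi''(|\bQ|)|\partial_i\bQ|^2$ the lower bound is the ellipticity in the first line of \eqref{eq:ass_S} applied with $\bP=\bQ$ and test tensor $\partial_i\bQ$: since $\bQ$ is symmetric, so is $\partial_i\bQ$, and the symmetric-part qualifier in \eqref{eq:ass_S} reduces to the identity, yielding $\mathcal{P}_i(\bQ)\geq \kappa_0\phi''(|\bQ|)|\partial_i\bQ|^2$. The matching upper bound follows from the pointwise Hessian bound in the second line of \eqref{eq:ass_S} applied to each summand in the expansion \eqref{eq:P}, together with Cauchy--Schwarz on the finitely many indices $k,l,m,n$. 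Nonnegativity of $\mathcal{P}_i(\bQ)$, noted just after \eqref{eq:P}, removes any sign ambiguity.

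For $\mathcal{P}_i(\bQ) \sim |\partial_i\bS(\bQ)|^2/\phi''(|\bQ|)$, the chain rule $\partial_i S_{mn}(\bQ)=\sum_{k,l}\partial_{kl}S_{mn}(\bQ)\,\partial_i Q_{kl}$ combined with the Hessian bound yields $|\partial_i\bS(\bQ)|\leq c\,\phi''(|\bQ|)|\partial_i\bQ|$, and combining with the first equivalence gives $|\partial_i\bS(\bQ)|^2/\phi''(|\bQ|)\leq c\,\mathcal{P}_i(\bQ)$. The converse bound is the one place where a small trick is needed. Since the Hessian $\partial_{kl}S_{mn}$ is not assumed symmetric in the index pairs $(kl)\leftrightarrow(mn)$, one cannot Cauchy--Schwarz the Hessian bilinear form directly. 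Instead, I would apply plain Cauchy--Schwarz to the tensor inner product $\mathcal{P}_i(\bQ)=\partial_i\bS(\bQ)\cdot\partial_i\bQ\leq|\partial_i\bS(\bQ)||\partial_i\bQ|$, and then invoke the ellipticity lower bound $|\partial_i\bQ|^2\leq \kappa_0^{-1}\mathcal{P}_i(\bQ)/\phi''(|\bQ|)$ from the first equivalence to absorb the factor $|\partial_i\bQ|$; after dividing by $\sqrt{\mathcal{P}_i(\bQ)}$ and squaring, this delivers $\mathcal{P}_i(\bQ)\leq \kappa_0^{-1}|\partial_i\bS(\bQ)|^2/\phi''(|\bQ|)$. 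This Cauchy--Schwarz-plus-ellipticity step, which sidesteps the lack of symmetry of the Hessian, is the only non-trivial ingredient; everything else unfolds directly from the definition of $(p,\delta)$-structure.
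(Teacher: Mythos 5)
Your proposal is correct and follows essentially the same route as the paper: the equivalence $\mathcal{P}_i(\bQ)\sim\phi''(|\bQ|)|\partial_i\bQ|^2$ comes from the two lines of \eqref{eq:ass_S} combined with \eqref{eq:grad}, the bound $|\partial_i\bS(\bQ)|^2/\phi''(|\bQ|)\le c\,\mathcal{P}_i(\bQ)$ from the chain rule and the Hessian bound, and the converse from the same Cauchy--Schwarz-plus-ellipticity step the paper uses, namely $|\mathcal P_i(\bQ)|^2\le|\partial_i\bS(\bQ)|^2|\partial_i\bQ|^2\le c\,|\partial_i\bS(\bQ)|^2\,\mathcal P_i(\bQ)/\phi''(|\bQ|)$. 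No gaps.
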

\begin{proof}
  The assertions are proved in~\cite{SS00} using a different
  notation. For the convenience of the reader we sketch the proof
  here.  The equivalences in~\eqref{eq:p-F} follow
  from~\eqref{eq:grad}, \eqref{eq:P} and the fact that $\bS$ has
  $(p,\delta)$-structure. Furthermore, we have, using~\eqref{eq:p-F},
  \begin{align*}
    \abs{\mathcal P_i(\bQ)}^2 
    &\le \abs{\partial _i\bS(\bQ)}^2 \abs{\partial _i \bQ}^2 
     \le c\,\abs{\partial _i\bS(\bQ)}^2 \frac
      {\mathcal{P}_i(\bQ)}{\phi''(|\bQ|)}\,,
  \end{align*}
  which proves one inequality of~\eqref{eq:p-S}. The other follows
  from
  \begin{align*}
    \abs{\partial _i\bS(\bQ)}^2 
    &\le c\sum_{k,l=1}^3 \abs{\partial_{kl}\bS(\bQ)\, \partial _i
      Q_{kl}}^2  \le c \,\big(\phi''(\abs{\bQ})\big )^2 \abs{\partial
      _i \bQ}^2 \le c \,\phi''(\abs{\bQ}) \mathcal P _i (\bQ)\,,
  \end{align*}
where we used~\eqref{eq:ass_S} and~\eqref{eq:p-F}.
\end{proof}
\subsection{Existence of weak solutions} 
In this section we define weak solutions of~\eqref{eq}, recall the main
results of existence and uniqueness and discuss a perturbed problem,
which is used to justify the computations that follow. From now on we
restrict ourselves to the case $p\le 2$.
\begin{definition}
  We say that $\bu \in W^{1,p}_0(\Omega)$ is a weak solution
  to~\eqref{eq} if for all $\bv\in W^{1,p}_0(\Omega)$ 
  \begin{equation*}
    \int_\Omega \bS(\bD\bu)\cdot\bD\bv\,d\bx=\int_\Omega \bff\cdot
    \bv\,d\bx\,. 
  \end{equation*}
\end{definition}
We have the following very standard result:
\begin{Proposition}
\label{thm:existence}
  Let the tensor field $\T$ in~\eqref{eq} have
  $(p,\delta)$-structure for some $p \in (1,2]$, and $\delta \in
  [0,\infty)$. Let $\O\subset\eR^3$ be a bounded domain with $C^{2,1}$
  boundary and let $\ff\in L^{p'}(\O)$. Then, there exists a unique
  weak solution $\bu$ to~\eqref{eq} such that
  \begin{equation*}
    \int_\Omega\phi(|\bD\bu|)\,d\bx\leq  c(\|\bff\|_{p'},\delta)\,.
  \end{equation*}
\end{Proposition}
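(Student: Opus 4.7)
The plan is to apply classical Browder--Minty monotone operator theory, adapted to the Orlicz--Sobolev setting imposed by the $(p,\delta)$-structure. First I would introduce the nonlinear operator $\mathcal{A}\colon W^{1,p}_0(\Omega) \to (W^{1,p}_0(\Omega))^*$ defined by
\begin{equation*}
  \langle \mathcal{A}\bu, \bv\rangle := \int_\Omega \bS(\bD\bu)\cdot \bD\bv\,d\bx,
\end{equation*}
and verify it is well defined using the growth bound $|\bS(\bP)| \leq c(\delta + |\bP^\sym|)^{p-1}$, which one reads off from~\eqref{eq:hammere} with $\bQ = \bfzero$ together with the definition~\eqref{eq:5} of $\phi$. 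Since $p-1 \in (0,1]$, this guarantees $\bS(\bD\bu) \in L^{p'}(\Omega)$ for every $\bu \in W^{1,p}_0(\Omega)$.

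Next I would verify the three hypotheses of Browder--Minty. Strict monotonicity on symmetric gradients is immediate from~\eqref{eq:hammera}. Coercivity follows by combining~\eqref{eq:hammerd} with~\eqref{eq:equi3}, Korn's inequality in $L^p$ (available for $1<p\leq 2$ on the $C^{2,1}$ domain $\Omega$ with vanishing trace), and the Poincar\'e inequality, which together give
\begin{equation*}
  \langle \mathcal{A}\bu,\bu\rangle \sim \int_\Omega \phi(|\bD\bu|)\,d\bx \geq c_1\|\nabla\bu\|_p^p - c_2\,\delta^p|\Omega|,
\end{equation*}
so the left side tends to infinity as $\|\nabla\bu\|_p \to \infty$. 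Hemicontinuity of $\mathcal{A}$ follows at once from continuity of $\bS$ and dominated convergence using the same growth bound. Browder--Minty then yields existence, and strict monotonicity yields uniqueness.

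For the claimed estimate I would test with $\bv = \bu$ and combine H\"older, Poincar\'e, Korn, and Young's inequality for the complementary N-functions $\phi$ and $\phi^*$ to absorb the right-hand side:
\begin{equation*}
  \int_\Omega \phi(|\bD\bu|)\,d\bx \leq c\|\ff\|_{p'}\|\nabla\bu\|_p \leq \tfrac12\int_\Omega \phi(|\bD\bu|)\,d\bx + c(\|\ff\|_{p'}, \delta),
\end{equation*}
once again using~\eqref{eq:equi3} to pass between $\|\nabla\bu\|_p^p$ and $\int_\Omega \phi(|\bD\bu|)\,d\bx$ modulo the $\delta^p|\Omega|$ term. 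The only mildly technical point is the invocation of Korn's inequality in $L^p$ with $1<p\leq 2$ on a $C^{2,1}$ domain with vanishing trace; this is classical. Otherwise the statement is a textbook application of monotone operator theory to a coercive, strictly monotone, hemicontinuous operator.
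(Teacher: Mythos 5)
Your proposal is correct and follows exactly the route the paper takes: the authors' entire proof is the single sentence that the assertions ``follow directly from the assumptions, by using the theory of monotone operators,'' and your write-up simply supplies the standard details (growth, strict monotonicity and coercivity via Korn and \Poincare{}, hemicontinuity, and the a priori bound from testing with $\bu$ and absorbing via Young's inequality). No discrepancies to report.
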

\begin{proof}
  The assertions follow directly from the assumptions, by using the
  theory of monotone operators.
\end{proof}
In order to justify some of the following computations we find it convenient to consider a
perturbed problem, where we add to the tensor field $\bS$ with $(p,\delta)$-structure a
linear perturbation. Using again the theory of monotone operators one can easily prove:
%
\begin{Proposition}
\label{thm:existence_perturbation}
Let the tensor field $\T$ in~\eqref{eq} have $(p,\delta)$-structure
for some $p \in (1,2]$, and $\delta \in [0,\infty)$ and let
$\bff\in L^{p'}(\Omega)$ be given. Then, there exists a unique weak
solution $\bue\in W^{1,2}_0(\Omega)$ of the problem 
\begin{equation}
  \label{eq-e}
  \begin{aligned}
    -\divo \bfS^\vep (\bfD\bue)&=\bff\qquad&&\text{in
    }\Omega\,,
    \\
    \bue &= \bfzero &&\text{on } \partial \Omega\,,
  \end{aligned}
\end{equation}
where 
  \begin{equation*}
    \bS^\epsilon(\bQ):=\epsilon \,\bQ +
    \bS(\bQ),\qquad\text{with }\epsilon>0\,,
  \end{equation*}
  i.e.~ $\bue$ satisfies for all $\bv \in W^{1,2}_0(\Omega)$
  \begin{equation*}
    \int_\Omega \bS^\vep(\bD\bue)\cdot\bD\bv\,d\bx=\int_\Omega \bff\cdot
    \bv\,d\bx\,. 
  \end{equation*}
  The solution $\bue$ satisfies the estimate
  \begin{equation}
    \label{eq:main-apriori-estimate2}
    \epsilon\int_\Omega
    |\nabla\bue|^2+\phi(|\bD\bue|)\,d\bx\leq
    c(\|\bff\|_{p'},\delta).
  \end{equation}
\end{Proposition}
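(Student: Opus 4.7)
The plan is to recast~\eqref{eq-e} as a monotone operator equation on the Hilbert space $W^{1,2}_0(\Omega)$ and invoke the Browder--Minty theorem. I define $\mathcal{A}\colon W^{1,2}_0(\Omega) \to W^{-1,2}(\Omega)$ by
\[
\langle \mathcal{A}(\bu),\bv\rangle := \epsilon \int_\Omega \bD\bu \cdot \bD\bv\,d\bx + \int_\Omega \bS(\bD\bu)\cdot \bD\bv\,d\bx,
\]
so that $\bue$ is a weak solution of~\eqref{eq-e} if and only if $\mathcal{A}(\bue) = \bff$. Boundedness of $\mathcal{A}$ into $W^{-1,2}(\Omega)$ follows because for $p\in(1,2]$ the growth $|\bS(\bQ)|\lesssim \phi'(|\bQ|) \le c(1+|\bQ|)$ places $\bS(\bD\bu)$ in $L^2(\Omega)$ whenever $\bu\in W^{1,2}_0(\Omega)$, and hemicontinuity is immediate from the continuity of $\bS$ off the origin together with dominated convergence.

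The structural core is monotonicity. From~\eqref{eq:hammera} one has $(\bS(\bD\bu)-\bS(\bD\bv))\cdot(\bD\bu-\bD\bv)\ge 0$, and combined with the linear contribution of the $\epsilon$-term this yields
\[
\langle \mathcal{A}(\bu)-\mathcal{A}(\bv),\bu-\bv\rangle \ge \epsilon \int_\Omega |\bD(\bu-\bv)|^2\,d\bx,
\]
which via Korn's inequality becomes strict monotonicity on $W^{1,2}_0(\Omega)$ and therefore yields uniqueness. Coercivity follows by testing with $\bu$ itself: by~\eqref{eq:hammerd} we have $\bS(\bD\bu)\cdot \bD\bu \sim \phi(|\bD\bu|)\ge 0$, so $\langle \mathcal{A}(\bu),\bu\rangle \ge \epsilon \int_\Omega |\bD\bu|^2\,d\bx \ge c\,\epsilon\, \|\nabla\bu\|_2^2$ by Korn, which grows to infinity with $\|\bu\|_{1,2}$. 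Browder--Minty then produces the unique $\bue\in W^{1,2}_0(\Omega)$.

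For the a priori estimate~\eqref{eq:main-apriori-estimate2} I plan to test the equation with $\bue$ itself, so that using~\eqref{eq:hammerd} once more
\[
\epsilon \int_\Omega |\bD\bue|^2\,d\bx + c\int_\Omega \phi(|\bD\bue|)\,d\bx \le \int_\Omega \bff \cdot \bue\,d\bx.
\]
The right-hand side is controlled by H\"older, Poincar\'e and Korn as $\|\bff\|_{p'}\|\nabla\bue\|_p \lesssim \|\bff\|_{p'}\|\bD\bue\|_p$, and then~\eqref{eq:equi3} combined with Young's inequality lets me absorb $\|\bD\bue\|_p^p \lesssim \int_\Omega\phi(|\bD\bue|)\,d\bx + \delta^p|\Omega|$ into the left-hand side, leaving a bound of the form $c(\|\bff\|_{p'},\delta)$ on $\int_\Omega\phi(|\bD\bue|)\,d\bx$. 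A final Korn estimate converts the $\epsilon\int|\bD\bue|^2$ contribution into $\epsilon\int|\nabla\bue|^2$ and delivers~\eqref{eq:main-apriori-estimate2}. I do not anticipate any real obstacle: everything is a textbook application of the theory of monotone operators to an $(\epsilon + (p,\delta))$-structured stress tensor, and the only point requiring attention is the systematic use of Korn's inequality to pass between $\nabla\bue$ and $\bD\bue$ both in the Hilbert-space coercivity and in the final statement of the a priori estimate.
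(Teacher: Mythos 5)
Your proposal is correct and is exactly the argument the paper has in mind: the authors dispose of this proposition with the single remark that it follows from the theory of monotone operators, and your Browder--Minty setup (monotonicity from~\eqref{eq:hammera} plus the $\epsilon$-term, coercivity and the a priori bound from testing with $\bue$, and Korn/Poincar\'e to pass between $\bD\bue$ and $\nabla\bue$) supplies precisely the standard details they omit. No gaps.
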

\begin{remark}
  In fact, one could already prove more at this point. Namely, that for
  $\epsilon\to0$, the unique solution $\bue$ converges to the unique
  weak solution $\bu $ of the unperturbed problem~\eqref{eq}. Let us
  sketch the argument only, since later we get the same result with
  different easier arguments. From \eqref{eq:main-apriori-estimate2}
  and the properties of $\bS$ follows that
  \begin{align*}
    \bue &\rightharpoonup \bu \qquad\text{in }W^{1,p}_0(\Omega)\,,
    \\
    \bS(\bD\bue)&\rightharpoonup \boldsymbol\chi \qquad \text{in }L^{p'}(\Omega)\,.
  \end{align*}
  Passing to the limit in the weak formulation of the perturbed problem, we get
  \begin{equation*}
    \int_\Omega\boldsymbol {\chi}\cdot\bD\bv\,d\bx
    =\int_\Omega\bff\cdot\bv\,d\bx\qquad\forall\,\bv \in
    W^{1,p}_0(\Omega)\,. 
  \end{equation*}
  One can not show directly that
  $\lim _{\vep \to 0} \int_\Omega \vep \bD\bue \cdot (\bD\bue
  -\bD\bu)\, d\bx =0$, since $\bD\bu $ belongs to $L^p(\O)$ only.
  Instead one uses the Lipschitz truncation method
  (cf.~\cite{dms,r-cetraro}). Denoting by $\bv^{\vep,j}$ the Lipschitz
  truncation of $\xi(\bue-\bu)$, where $\xi\in C^\infty_0(\Omega)$ is
  a localization, one can show, using the ideas from
  \cite{dms,r-cetraro}, that
\begin{equation}
\label{eq:convergence}
  \limsup_{\vep\to 0}\Big|\int_\Omega \big (\bS(\bD\bue)
    -\bS(\bD\bu)\big )\cdot \bD\bv^{\vep,j}\,d\bx \Big|=0,
\end{equation}
which implies $\bD\bue\to\bD\bu$ almost everywhere in $\Omega$.
Consequently, we have $\boldsymbol\chi=\bS(\bD\bu)$, since weak and
a.e.~limits coincide.
\end{remark}

%
\subsection{Description and properties of the boundary}
\label{sec:bdr} 
$\hphantom{}$
We assume that the boundary $\partial\O$ is of class $C^{2,1}$, that
is for each point $P\in\partial\O$ there are local coordinates such
that in these coordinates we have $P=0$ and $\partial\O$ is locally
described by a $C^{2,1}$-function, i.e.,~there exist
$R_P,\,R'_P \in (0,\infty),\,r_P\in (0,1)$ and a $C^{2,1}$-function
$a_P:B_{R_P}^{2}(0)\to B_{R'_P}^1(0)$ such that
\begin{itemize}
\item   [\rm (b1) ] $\bx\in \partial\O\cap (B_{R_P}^{2}(0)\times
  B_{R'_P}^1(0))\ \Longleftrightarrow \ x_3=a_P(x_1,x_2)$,
\item   [\rm (b2) ] $\Omega_{P}:=\{(x,x_{3})\fdg x=(x_1,x_2)^\top
 \in  B_{R_P}^{2}(0),\ a_P(x)<x_3<a_P(x)+R'_P\}\subset \Omega$, 
\item [\rm (b3) ] $\nabla a_P(0)=\bfzero,\text{ and }\forall\,x=(x_1,x_2)^\top
  \in B_{R_P}^{2}(0)\quad |\nabla a_P(x)|<r_P$,
\end{itemize}
where $B_{r}^k(0)$ denotes the $k$-dimensional open ball with center
$0$ and radius ${r>0}$.  Note that $r_P $ can be made arbitrarily
small if we make $R_P$ small enough.  In the sequel we will also use,
for $0<\lambda<1$, the following scaled open sets, $\lambda\,
\Omega_P\subset \Omega_P$ defined as follows
\begin{equation*}
  \lambda\, \Omega_P:=\{(x,x_{3})\fdg x=(x_1,x_2)^\top
 \in
  B_{\lambda R_P}^{2}(0),\ a_P(x)<x_3<a_P(x)+\lambda R_P'\}.
\end{equation*}
To localize near to $\partial\Omega\cap \partial\Omega_P$, for $P\in\bou$, we fix smooth
functions $\xi_{P}:\setR^{3}\to\setR$ such that 
\begin{itemize}
\item [$\rm (\ell 1)$] $\chi_{\frac{1}{2}\Omega_P}(\bx)\leq\xi_P(\bx)\leq
  \chi_{\frac{3}{4}\Omega_P}(\bx)$,
\end{itemize}
where $\chi_{A}(\bx)$ is the indicator function of the measurable set
$A$. 
For the remaining interior estimate we  localize by a smooth function
${0\leq\xi_{00}\leq 1}$ with $\spt \xi_{00}\subset\Omega_{00}$,
where $\Omega_{00}\subset \Omega$ is an open set such that
$\dist(\partial\Omega_{00},\,\partial\Omega)>0$.  
Since the boundary $\bou $ is compact, we can use an appropriate
finite sub-covering which, together with the interior estimate, yields
the global estimate.

Let us introduce the tangential derivatives near the boundary. To
simplify the notation we fix $P\in \bou$, $h\in (0,\frac{R_P}{16})$,
and simply write $\xi:=\xi_P$, $a:=a_P$. We use the standard notation
$\bx =(x',x_3)^\top$ and denote by $\be^i,i=1,2,3$ the canonical
orthonormal basis in $\setR^3$. In the following lower-case Greek
letters take values $1,\, 2$. For a function $g$ with $\spt
g\subset\spt\xi$ we define for $\alpha=1,2$
%
\begin{equation*}
\begin{aligned}
  \trap g(x',x_3) = g_{\tau _\alpha}(x',x_3)&:=g\big (x' +
  h\,\be^\alpha,x_3+a(x'+h\,\be^\alpha)-a(x')\big )\,,
\end{aligned}
\end{equation*}
and if $\Delta^+ g:=\trap g-g$, we define tangential divided
differences by
%
  $\difp g:= h^{-1}\Delta^+ g$.  
%
It holds that, if $ g \in W^{1,1}(\O)$, then we have for $\alpha=1,2$
\begin{align} 
  \label{eq:1}
  \difp g \to \td g=\partial _{\tau_\alpha}g :=\partial_\alpha g +\partial_\alpha
  a\, \partial_3 g  \qquad \text{ as } h\to 0,
\end{align} 
almost everywhere in $\spt\xi$, (cf.~\cite[Sec.~3]{mnr2}). Conversely uniform $L^q$-bounds
for $\difp g$ imply that $\partial _\tau g $ belongs to $L^q(\spt\xi)$.
%

For simplicity we denote $\nabla a:=(\partial_1a,\partial_{2}a,
0)^\top$.
%
The following variant of integration per parts will be often used.
\begin{lemma}
  \label{lem:TD3}
  Let $\spt g\cup\spt f\subset\spt\xi$ and $h$ small enough. Then
  \begin{equation*}
    \intO f\tran g \, d\bx =\intO\trap f g\, d\bx.
  \end{equation*}
  Consequently, $\intO f\difp g \, d\bx= \intO(\difn f )g\, d\bx$.
  Moreover, if in addition $f$ and $g$ are smooth enough and at least
  one vanishes on $\partial\Omega$, then 
\begin{equation*} \intO f\td g \, d\bx= -\intO(\td f )g\,
    d\bx.
  \end{equation*}
\end{lemma}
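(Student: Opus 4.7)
The plan is to establish the first identity via a change of variables, derive the second as a direct algebraic consequence, and prove the third by a direct integration by parts in the straightened coordinates.

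For the first identity, I would perform in $\intO f\,\tran g\,d\bx$ the substitution $\bx\mapsto\by$ defined by $y'=x'-h\be^\alpha$ and $y_3=x_3+a(x'-h\be^\alpha)-a(x')$. Since $y'$ depends only on $x'$ and $\partial y_3/\partial x_3=1$, the Jacobian matrix is lower triangular with unit diagonal, so $|\det|=1$. The inverse maps $\by$ to $(y'+h\be^\alpha,\,y_3+a(y'+h\be^\alpha)-a(y'))$, so that $f(\bx)=\trap f(\by)$, while by definition $\tran g(\bx)=g(\by)$. This directly yields $\intO f\,\tran g\,d\bx=\intO\trap f\cdot g\,d\by$.

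For the second identity, expanding $\difp g=h^{-1}(\trap g-g)$ and applying the first identity with the roles of $f$ and $g$ interchanged gives $\intO f\,\trap g\,d\bx=\intO\tran f\cdot g\,d\bx$, whence
\begin{equation*}
\intO f\,\difp g\,d\bx=h^{-1}\intO(\tran f-f)\,g\,d\bx=\intO\difn f\cdot g\,d\bx,
\end{equation*}
where the last step uses the natural convention $\difn f=h^{-1}(\tran f-f)$ symmetric to $\difp$. For the third identity, for smooth $f,g$ I would use the explicit formula $\td g=\partial_\alpha g+\partial_\alpha a\cdot\partial_3 g$ recalled in~\eqref{eq:1} and integrate the first summand by parts in $x_\alpha$ and the second in $x_3$. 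The support condition $\spt f\cup\spt g\subset\spt\xi\subset\overline{\frac{3}{4}\Omega_P}$ kills the boundary contributions on the lateral and top parts of $\partial(\frac{3}{4}\Omega_P)$, while the bottom piece lies in $\partial\Omega$ and there the hypothesis that at least one of $f,g$ vanishes on $\partial\Omega$ kills the remaining contribution. Crucially, $a=a(x')$ is independent of $x_3$, so $\partial_3(\partial_\alpha a)=0$; the $x_3$-integration by parts therefore produces exactly $\partial_\alpha a\cdot\partial_3 f$, which combines with the other term to yield $-\intO\td f\cdot g\,d\bx$.

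The only technical point is verifying admissibility of the change of variables: the nonlinear shift $x_3\mapsto x_3+a(x'-h\be^\alpha)-a(x')$ must keep the image inside a set where $f$, $g$, and $a$ are all defined. This is ensured by the smallness of $h$ relative to $R_P$ together with the Lipschitz bound $|\nabla a|<r_P$ from (b3), which controls the $x_3$-displacement by $r_P h$ and so keeps the image inside $\Omega_P$.
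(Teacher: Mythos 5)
Your proof is correct and is exactly the standard argument the paper intends (the lemma is stated without proof there): the measure-preserving triangular change of variables for the translation identity, the algebraic consequence for the divided differences, and integration by parts in $x_\alpha$ and $x_3$ using $\partial_3(\partial_\alpha a)=0$ for the tangential derivative. Your closing remark on admissibility of the substitution — that the map preserves the height $x_3-a(x')$ above the graph and that $h$ small keeps $\spt\xi$-supported functions inside $\Omega_P$ — is precisely the point that needs checking, and you handle it adequately.
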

\section{Proof of the main result}
In the proof of the main result we use finite differences to show
estimates in the interior and in tangential directions near the
boundary and calculations involving directly derivatives in
"{}normal"\ directions near the boundary. In order to justify that all
occurring quantities are well posed, we perform the estimate for the
approximate system~\eqref{eq-e}.


The first intermediate step is the following result for the
approximate problem. 
\begin{Proposition}  
  \label{prop:JMAA2017-1}
  Let the tensor field $\T$ in~\eqref{eq} have
  $(p,\delta)$-structure for some $p \in (1,2]$, and $\delta \in
  (0,\infty)$, and let $\bF$ be the associated tensor field to
  $\bS$. Let $\O\subset\eR^3$ be a bounded domain with $C^{2,1}$
  boundary and let $\ff\in L^{p'}(\O)$. Then, the unique weak solution
  $\bue\in W^{1,2}_{0}(\O)$ of the approximate problem~\eqref{eq-e}
  satisfies
  \begin{align}
    \begin{aligned}
      \epsilon \intO\xi_0^2 \abs{\nabla ^2\bue}^2+\xi_0^2
      \abs{\nabla \F(\bD\bue)}^2\,d\bx &\le c
      (\norm{\ff}_{p'},\norm{\xi_0}_{2,\infty},\delta) \,,
      \\
      \hspace*{-2mm} \epsilon \intO\xi^2_P \abs{\td \bD\bue}^2+\xi^2_P
      \abs{\td \F(\bD\bue)}^2\,d\bx &\le c
      (\norm{\ff}_{p'},\norm{\xi_P}_{2,\infty},\norm{a_P}_{C^{2,1}},\delta)
      \,. \hspace*{-3mm}
    \end{aligned} \label{est-eps}
  \end{align}
  Here $\xi_{0}$ is a cut-off function with support in the interior of
  $\Omega$, while for arbitrary $P\in \partial \Omega$ the function
  $\xi_{P}$ is a cut-off function with support near to the boundary
  $\partial \Omega$, as defined in Sec.~\ref{sec:bdr}.  The tangential
  derivative $\partial_\tau$ is defined locally in $\Omega_P$
  by~\eqref{eq:1}. Moreover, there exists a constant $C_1>0$ such that
  \begin{equation}
    \label{est-eps-1}
    \epsilon  \intO\xi^2 \abs{\partial _3 \bD\bue}^2+\xi^2
    \abs{\partial _3 \F(\bD\bue)}^2\,d\bx   
    \le c (\norm{\ff}_{p'},\norm{\xi}_{2,\infty},\norm{a}_{C^{2,1}},\delta^{-1},
    \vep^{-1},C_1)     
  \end{equation}
  provided that in the local description of the boundary there holds
  $r_P<C_1$ in $(b3)$.

In particular, these estimates imply that $\bue \in W^{2,2}(\Omega)$
and that~\eqref{eq-e} holds pointwise a.e.~in $\Omega$.
\end{Proposition}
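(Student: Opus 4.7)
The plan is to prove the three estimates in turn: the interior bound and the tangential boundary bound through standard difference-quotient testing, and the normal derivative bound algebraically from the PDE itself.

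For the interior estimate I would test the weak form of~\eqref{eq-e} with $\bv=-\difn(\xi_0^2\,\difp\bue)$, where in this step $\difp$, $\difn$ denote Cartesian difference quotients of step $h$ in an arbitrary coordinate direction. A discrete integration by parts and the chain rule give
\begin{equation*}
\int_\Omega \difp\bS^\vep(\bD\bue)\cdot\bigl(\xi_0^2\,\difp\bD\bue + 2\xi_0\,\difp\bue\otimess\nabla\xi_0\bigr)\,d\bx = \int_\Omega \bff\cdot\bv\,d\bx.
\end{equation*}
Since $\bS^\vep(\bQ)=\vep\bQ+\bS(\bQ)$ is strictly monotone, the first summand on the left is bounded below, via~\eqref{eq:hammera} and Proposition~\ref{prop:equivalence}, by $\vep\int_\Omega\xi_0^2|\difp\bD\bue|^2\,d\bx + c\int_\Omega\xi_0^2|\difp\bF(\bD\bue)|^2\,d\bx$. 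The remaining commutator term and the right-hand side are handled by Young's inequality, absorbed into the good term, and bounded via~\eqref{eq:main-apriori-estimate2}. Passing $h\to 0$, invoking~\eqref{eq:1} and~\eqref{eq:grad}, and finally using Korn's inequality to upgrade $\nabla\bD\bue$ to $\nabla^2\bue$ in the $\vep$-term yields the first line of~\eqref{est-eps}.

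For the tangential estimate I would rerun this scheme with the boundary-adapted differences of Section~\ref{sec:bdr}, tested against $\bv=-\difn(\xi_P^2\,\difp\bue)$. Since the shift is along graphs of $a_P$, it preserves the no-slip condition, so the test function lies in $W^{1,2}_0(\Omega)$. The only new feature is that $\difp\bD\bue$ differs from $\bD(\difp\bue)$ by commutator terms containing $\difp\nabla a_P$ and $\nabla a_P\otimes\partial_3\bue$; being of order zero in the derivatives being estimated, these are controlled by $\|a_P\|_{C^{2,1}}$ together with Young's inequality and~\eqref{eq:main-apriori-estimate2}. After $h\to 0$ this gives the second line of~\eqref{est-eps}.

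The main obstacle is~\eqref{est-eps-1}, because difference quotients in $x_3$ do not respect the Dirichlet condition. I would proceed algebraically: writing~\eqref{eq-e} componentwise and expanding $\partial_j S_{ij}(\bD\bue)=\partial_{kl}S_{ij}(\bD\bue)\,\partial_j D_{kl}(\bue)$ via the chain rule, one decomposes each $\partial_j D_{kl}(\bue)$ using~\eqref{eq:1} into tangential-tangential and tangential-normal pieces (both already controlled by the previous estimate) and pure normal-normal contributions. Isolating the pure normal block yields an algebraic system for $\partial_3\bD\bue$ whose coefficient matrix equals $\vep\,\mathrm{Id}$ plus a contribution bounded below by $\kappa_0\phi''(|\bD\bue|)\ge\kappa_0\delta^{p-2}$ on directions genuinely normal to $\partial\Omega$, up to an off-diagonal perturbation of norm at most $c\,r_P(\vep+\phi''(|\bD\bue|))$ coming from the $\partial_\alpha a_P\,\partial_3$ piece of~\eqref{eq:1}. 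Choosing $C_1$ so that $c\,r_P$ lies strictly below this ellipticity threshold makes the matrix invertible with operator norm at most $(\vep+\kappa_0\delta^{p-2})^{-1}$; this is the origin of the $\vep^{-1}$ and $\delta^{-1}$ dependence in~\eqref{est-eps-1}. Solving for $\partial_3\bD\bue$ in terms of $\bff$ and the tangential quantities already bounded in~\eqref{est-eps} completes the estimate. Gluing the three pieces by a finite subcovering of $\partial\Omega$ gives $\bue\in W^{2,2}(\Omega)$, after which~\eqref{eq-e} holds pointwise a.e.~in $\Omega$.
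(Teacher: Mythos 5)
Your overall strategy coincides with the paper's: difference quotients in the interior and in the boundary-adapted tangential directions, followed by an algebraic recovery of the normal derivatives from the pointwise form of the equation, using the coercivity of $\partial\bS$ tested against the symmetric tensor built from $\partial_3 D_{i3}\bue$ and absorbing the $r_P$-perturbation. The first two estimates are fine (one small imprecision: the commutator terms in the tangential step are not all of order zero --- e.g.\ the term $\bS(\bD\bue)\cdot\xi^2\,\difp\partial_3\bue\otimess\difp\nabla a$ contains the first-order quantity $\difp\nabla\bue$ and must be absorbed by Young's inequality with a small parameter, not merely bounded by the a priori estimate; and the passage from $\nabla\bD\bue$ to $\nabla^2\bue$ uses the pointwise identity $\partial_j\partial_k v^i=\partial_jD_{ik}\bv+\partial_kD_{ij}\bv-\partial_iD_{jk}\bv$ rather than Korn's inequality).

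The genuine error is in your justification of~\eqref{est-eps-1}. For $p\le 2$ one has $\phi''(t)\sim(\delta+t)^{p-2}$, which is \emph{non-increasing} in $t$, so the correct uniform bound is $\phi''(|\bD\bue|)\le c\,\delta^{p-2}$; your claimed lower bound $\phi''(|\bD\bue|)\ge\kappa_0\delta^{p-2}$ is false (take $|\bD\bue|$ large), and consequently the asserted operator-norm bound $(\vep+\kappa_0\delta^{p-2})^{-1}$ for the inverse of the normal block is not available and is in any case not the source of the $\delta^{-1}$-dependence. The only uniform ellipticity of the normal block is the $\vep\,\mathrm{Id}$ part. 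The correct mechanism is: the coercivity gives the pointwise inequality $\bigl(\vep+\phi''(|\bD\bue|)\bigr)\,|\partial_3\bD\bue|\le c\bigl(|\bff|+(\vep+\phi''(|\bD\bue|))(|\partial_\tau\nabla\bue|+r_P|\nabla^2\bue|)\bigr)$; after absorbing the $r_P$-term one \emph{drops} the nonnegative $\phi''$-contribution on the left, squares, and uses $(\vep+\phi'')^2\le c(\vep^2+\delta^{2(p-2)})$ on the right, so that the tangential bound $\vep\int\xi^2|\partial_\tau\nabla\bue|^2\,d\bx\le c$ can be invoked after dividing by $\vep$. This is where both the $\vep^{-1}$ and the $\delta^{-1}$ dependence in~\eqref{est-eps-1} actually enter; the bound for $\int\xi^2|\partial_3\bF(\bD\bue)|^2\,d\bx$ then follows from $|\partial_3\bF(\bD\bue)|^2\sim\phi''(|\bD\bue|)|\partial_3\bD\bue|^2\le c\,\delta^{p-2}|\partial_3\bD\bue|^2$. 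With this correction your argument closes and is the paper's argument.
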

The two estimates~\eqref{est-eps} are uniform with respect to $\vep$ and could be also
proved directly for the problem~\eqref{eq}. However, the third estimate~\eqref{est-eps-1}
depends on $\vep$ but is needed to justify all subsequent steps, which will give the
proof of an estimate uniformly in $\vep$, by using a different technique.

%
\begin{proof}[Proof of Proposition~\ref{prop:JMAA2017-1}]
  The proof of estimate \eqref{est-eps} is very similar, being in fact
  a simplification (due to the fact that there is no pressure term
  involved), to the proof of the results in~\cite[Theorems 2.27,
  2.28]{br-reg-shearthin}. On the other hand the proof of
  \eqref{est-eps-1} is different from the one in
  \cite{br-reg-shearthin} due to the missing divergence constraint. In
  fact it adapts techniques known from nonlinear elliptic systems. For
  the convenience of the reader we recall the main steps
  here. 

  Fix $P\in \partial \Omega$ and use in $\O_P$ 
  \begin{equation*}
    \bv=\difn{(\xi^2\difp(\bue |_{\frac 12 {\Omega}_P}))}\\,
  \end{equation*}
  where $\xi:=\xi_P$, $a:=a_P$, and $h\in(0,\frac{R_{P}}{16})$, as a
  test function in the weak formulation of~\eqref{eq-e}. This yields 
\begin{equation*}
  \begin{aligned}
    \intO& \xi^2\difp{\T^\vep(\bD\bue)}\cdot \difp \bD\bue \, d\bx
    \\
    =&-\intO \T^\vep(\bD\bue)\cdot\big(\xi ^2 \difp \partial _3 \bue
    -(\xi_{-\tau } \difn\xi +\xi \difn\xi) \partial_3\bue \big)
    \otimess\difn\nabla a\, d\bx
    \\
    &-\intO \T^\vep(\bD\bue)\cdot \xi^2 \trap{(\partial _3\bue)}\otimess
    \difn{\difp{\nabla a}} - \T^\vep(\bD\bue)\cdot\difn{\big(2\xi \nabla \xi
      \otimess \difp\bue\big)}\, d\bx
    \\
    &+\intO \T^\vep(\trap{(\bD\bue)})\cdot \big(2 \xi \partial_3\xi \difp\bue +
    \xi^2 \difp\partial_3\bue \big)\otimess\difp\nabla a \, d\bx
    \\
    &+\intO\ff\cdot\difn(\xi^2 \difp \bue)\, d\bx=:\sum_{j=1}^{8} I_j\,.
  \end{aligned}
\end{equation*}
From the assumption on $\bS$, Proposition~\ref{lem:hammer},
and~\cite[Lemma~3.11]{br-reg-shearthin} we have the following estimate
\begin{equation*}
  \begin{aligned}
    \label{eq:odhadT2}
    &\epsilon\intO \xi^2 \bigabs{ \difp \nabla \bue }^2 +\xi^2
    \bigabs{ \nabla \difp \bue }^2 +
    \bigabs{ \difp \F (\bD\bue) }^2
    + \phi\big(\xi \abs{\nabla \difp\bu}\big) + \phi\big(\xi
      \abs{ \difp \nabla \bu}\big)\, d\bx     
    \\
    &\qquad \leq   c \intO\xi ^2 \difp{\bS^\epsilon(\bD\bue)}\cdot
    \difp \bD\bue\,d\bx + c( \norm{\xi}_{1,\infty}, 
      \norm{a}_{C^{1,1}})\hspace*{-4mm} \int_{\Omega\cap \spt
        \xi}\hspace*{-4mm} \phi \big (\abs{ \nabla\bue }\big ) \,
      d\bx\,.
  \end{aligned}
\end{equation*}
The terms $I_1$--$I_7$ are estimated exactly as
in~\cite[(3.17)--(3.22)]{br-reg-shearthin}, while $I_8$ is estimated as
the term $I_{15}$ in~\cite[(4.20)]{br-reg-shearthin}. Thus, we get 
\begin{align*}
  & \intO \vep\, \xi^2 \bigabs{ \difp \nabla \bue }^2 \!+\!\vep \,\xi^2\bigabs{\nabla
    \difp\bue }^2 \! +\!\xi^2 \bigabs{ \difp \F (\bD\bue) }^2 \!+\! \phi (\xi
    \abs{\difp\nabla \bue}) \!+ \!\phi (\xi \abs{\nabla \difp \bue})\, d\bx
    \notag
  \\
  &\le c(\|\ff\|_{p'}, \norm{\xi}_{2,\infty},\norm{a}_{C^{2,1}},
    \delta )\,. 
\end{align*}
This proves the second estimate in~\eqref{est-eps} by standard
arguments. The first estimate in~\eqref{est-eps} is proved in the same
way with many simplifications, since we work in the interior where the
method works for all directions. This estimate implies that $\bue \in
W^{2,2}_{\loc }(\Omega)$ and that the system~\eqref{eq-e} is
well-defined point-wise a.e.~in $\Omega$. 

%
%
To estimate the derivatives in the $x_{3}$ direction we use
equation~\eqref{eq-e} and it is at this point that we have changes
with respect to the results in~\cite{br-reg-shearthin}. In fact, as
usual in elliptic problems, we have to recover the partial derivatives
with respect to $x_{3}$ by using the information on the tangential
ones. In this problem the main difficulty is that the leading order
term is nonlinear and depends on the symmetric part of the
gradient. Thus, we have to exploit the properties of
$(p,\delta)$-structure of the tensor $\bS$
(cf.~Definition~\ref{ass:1}).  Denoting, for\footnote{Recall that we
  use the summation convention over repeated Greek lower-case letters
  from $1$ to $2$. } $i=1,2,3$,\linebreak 
$\mathfrak {f}_i :=-f_i -\partial_{\gamma \sigma}S_{i
  3}(\bD\bue)\partial_3 D_{\gamma \sigma}\bue-
\sum_{k,l=1}^3\partial_{k l}S_{i \beta}(\bD\bue)\partial_\beta
D_{k l}\bue$, we can re-write the equations in~\eqref{eq-e}
as follows
  \begin{equation*}
    \label{eq:linear_system}
    \sum_{k=1}^3\partial_{k 3}S_{i 3}(\bD\bue)\partial_3
    D_{k 3}\bue +\partial_{3\alpha}S_{i 3}(\bD\bue)\partial_3
    D_{ 3\alpha}\bue  =\mathfrak{f}_i\qquad\textrm
    {a.e.\ in }\Omega\,.
  \end{equation*}
  Contrary to the corresponding
  equality~\cite[equation~(3.49)]{br-reg-shearthin}, here we use
  directly all the equations in~\eqref{eq}, and not only the first
  two. Now we multiply these equations not by $\partial _3 D_{i3}\bue$
  as expected, but by $\partial _3 \widehat D_{i 3}\bue$, where
  $\widehat D_{\alpha\beta}\bue =0$, for $\alpha,\beta=1,2$,
  $\widehat D_{\alpha 3}\bue =\widehat D_{3\alpha}\bue
  =2D_{\alpha3}\bue$, for $\alpha=1,2$,
  $\widehat D_{33}\bue =D_{33}\bue $.  Summing over $i=1,2,3$ we get,
  by using the symmetries in Remark~\ref{rem:delta0} (iii), that
  \begin{align}
    \label{eq:linear1}
    \begin{split}
      &4\, \partial _{\alpha3} S^\vep_{\beta3}(\bD\bue) \partial
      _3D_{\alpha3} \bue\partial _3D_{\beta3} \bue + 2\, \partial
      _{\alpha 3} S^\vep_{33}(\bD\bue) \partial _3D_{\alpha3} \bue\partial
      _3D_{33} \bue
      \\
      &\quad  +2\, \partial _{33} S^\vep_{\beta3}(\bD\bue) \partial _3D_{33}
      \bue\partial _3D_{\beta3} \bue + \partial _{33}
      S^\vep_{33}(\bD\bue) \partial _3D_{33} \bue\partial _3D_{33} \bue
      \\
      &=\sum_{i=1}^3 \mathfrak{f}_i \,\partial _3 \widehat D_{i3}\bue
      \qquad\qquad\textrm {a.e.\ in }\Omega\,.
    \end{split}
  \end{align}
  To obtain a lower bound for the left-hand side we observe that the terms on the left-hand
  side of~\eqref{eq:linear1} containing $\bS$ are equal to
  \begin{equation*}
    \sum\limits_{i,j,k,l=1}^3\hspace*{-3mm} \partial_{kl} S_{ij}
    (\bD\bue ) Q_{ij}Q_{kl},
  \end{equation*}
  if we choose
  $\bQ =\partial _3 \overline{\bD}\bue$, where $\overline{D}_{\alpha\beta}\bue =0$,
 for $\alpha,\beta=1,2$, $\overline{D}_{\alpha 3}\bue =\overline{D}_{3\alpha}\bue
 =D_{\alpha3}\bue$, for $\alpha=1,2$, and $\overline{D}_{33}\bue =D_{33}\bue $. Thus it follows
 from the coercivity estimate in~\eqref{eq:ass_S} that these terms are bounded from below by
 $\kappa_0\phi''(\abs{\bD\bue}) \abs{\partial _3 \overline{\bD} \bue}^2$. Similarly we see
 that the remaining terms on the left-hand side of~\eqref{eq:linear1} are equal to $\vep
 \abs{\partial _3 \overline{\bD}\bue}^2$. Denoting $\mathfrak b_i :=\partial_3 D_{i3}\bue$,
 $i=1,2,3$, we see that $\abs{\mathfrak b} \sim \abs{\widehat \bD\bue} \sim
 \abs{\overline{\bD}\bue}$. Consequently, we get from~\eqref{eq:linear1} the estimate
  \begin{equation*}
    \left( \epsilon+\phi''(|\bD\bu_\vep|)\right)| {\boldsymbol {
        \mathfrak b}}| \leq |\boldsymbol { \mathfrak f}|\qquad\textrm {a.e.\ in }\Omega\,.   
  \end{equation*}
  By straightforward manipulations (cf.~\cite[Sections 3.2 and
  4.2]{br-reg-shearthin}) we  can estimate the right-hand side as follows
  \begin{equation*}
    \begin{aligned}
      |\boldsymbol { \mathfrak f}| \leq c \left(|\bff|
        +(\epsilon+\phi''(|\bD\bu_\vep|))\left(|\partial_\tau\nabla
          \bu_\vep|+\|\nabla a\|_{\infty}|\nabla^2 \bu_\vep|\right)\right).
    \end{aligned}
  \end{equation*}
  Note that we can deduce from $\boldsymbol { \mathfrak b}$
  information about
  $\widetilde {\mathfrak b}_i:=\partial ^2_{33}u^i_\vep$, $i=1,2,3$,
  because
  $ |\boldsymbol { \mathfrak b}|\geq2|\widetilde{\boldsymbol{\mathfrak
      b}}|-|\partial_{\tau}\nabla\bu_\vep|-\|\nabla
  a\|_{\infty}|\nabla ^{2}\bu_\vep|$ holds a.e.~in $\Omega_P$. This and the last last two
  inequalities imply a.e.~in $\Omega_P$
  \begin{align*}
    &\left( \epsilon+\phi''(|\bD\bu_\vep|)\right)| {\widetilde {\boldsymbol {
        \mathfrak b}}}| 
        \leq  c \left(|\bff|+(\epsilon+\phi''(|\bD\bu_\vep|))\left(|\partial_\tau\nabla
          \bu_\vep|+\|\nabla a\|_{\infty}|\nabla^2 \bu_\vep|\right)\right).
  \end{align*}
  Adding on both sides, for
  $\alpha=1,2$ and $i,k=1,2,3$ the term
  \begin{equation*}
    \left(\vep+\phi''(|\bD\bu_\vep|)\right)\,|\partial_\alpha\partial_i
    u^k_\vep|\,,
  \end{equation*} 
  and using on the right-hand side the definition of the tangential
  derivative (cf.~\eqref{eq:1}), we finally arrive at
  \begin{equation*}
    \begin{aligned}
      &\left( \epsilon+\phi''(|\bD\bu_\vep|)\right)| {\nabla
        ^2\bu_\vep }| \leq c
      \left(|\bff|+(\epsilon+\phi''(|\bD\bu_\vep|))\left(|\partial_\tau\nabla
          \bu_\vep|+\|\nabla a\|_{\infty}|\nabla^2
          \bu_\vep|\right)\right)\,,
    \end{aligned}
  \end{equation*}
  which is valid a.e.~in $\Omega_P$. Note that the constant $c$ only
  depends on the characteristics of $\bS$.  Next, we can choose the
  open sets $\Omega_{P}$ in such a way that
  $\|\nabla a_{P}(x)\|_{{\infty},\Omega_{P}}$ is small enough, so that
  we can absorb the last term from the right hand side, which yields
  \begin{equation*}
    \label{eq:estimate_normal_final}
    \begin{aligned}
      & \left( \epsilon+\phi''(|\bD\bu_\vep|)\right)|\nabla ^{2}\bu_\vep|
       \leq c\left(|\bff|+\big(\epsilon+
         \phi''(|\bD\bu_\vep|)\big)|\partial_{\tau}\nabla\bu_\vep|\right)
       \textrm{ a.e. in }\Omega_P\,,
    \end{aligned}
  \end{equation*}
  where again the constant $c$ only depends on the characteristics of
  $\bS$. 
  By neglecting the second term on the left-hand side (which is
  non-negative), raising the remaining inequality to the power $2$,
  and using that $\bS$ has $(p,\delta)$-structure for $p<2$ we obtain
  \begin{equation*}
    \label{eq:6}
    \epsilon   \int_\Omega \xi_P^2|\nabla^2\bu_\vep|^2\,d\bx\leq
    c\int_\Omega|\bff|^2\,d\bx+\frac{(\epsilon+\delta^{2(p-2)})}{\epsilon}\
    \left(    \epsilon\int_\Omega      \xi_P^2 |\partial_\tau\nabla\bu_\vep|^2\,d\bx\right)
  \end{equation*}
  The already proven results on tangential derivatives and Korn's
  inequality imply that the last integral from right-hand side is
  finite. Thus, the properties of the covering imply the last estimate
  in~\eqref{est-eps}. 
\end{proof}
\subsection{Improved estimates for normal derivatives}
%
%
The proof of~\eqref{est-eps-1} used the system \eqref{eq-e} and
resulted in an estimate that is not uniform with respect to $\eps$.
In this section, by following the ideas in~\cite{SS00}, we proceed
differently and estimate $\mathcal P_3$ in terms of quantities
occurring in~\eqref{est-eps}. 
The main technical step of the paper is the proof of the following
result:
\begin{Proposition}
  \label{prop:main}
  Let the same hypotheses as in Theorem~\ref{thm:MT} be satisfied with
  $\delta >0$ and
  let the local description $a_P$ of the boundary and the localization
  function $\xi_P$ satisfy $(b1)$--\,$(b3)$ and $(\ell 1)$
  (cf.~Section~\ref{sec:bdr}). Then, there exist a constant $C_2>0$
  such that the weak solution
  $\bue\in W^{1,2}_{0}(\O)$ of the approximate problem~\eqref{eq-e}
  satisfies  for every $P\in \partial \Omega$
  \begin{equation*}
   \eps \int_\Omega \xi^2_P |\partial_3\bD\bue|^2\,d\bx +   \int_\Omega \xi^2_P |\partial_3\bF(\bD\bue)|^2\,d\bx\leq C
    (\norm{\ff}_{p'},\norm{\xi_P}_{2,\infty},\norm{a_P}_{C^{2,1}},\delta,
    C_2)\,,  
  \end{equation*}
  provided $r_P<C_2$ in $(b3)$. 
\end{Proposition}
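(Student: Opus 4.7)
The plan is to refine the pointwise argument used in Proposition~\ref{prop:JMAA2017-1} to prove~\eqref{est-eps-1}: instead of squaring the pointwise inequality $(\vep+\phi''(|\bD\bue|))|\partial_3\bD\bue|\leq c|\boldsymbol{\mathfrak f}|$ (which was responsible for the unwanted $\vep^{-1}$ factor), we keep $(\vep+\phi''(|\bD\bue|))$ in linear form and test the underlying identity against $\xi_P^2\,\partial_3\widehat{D}_{i3}\bue$ before integrating. Because Proposition~\ref{prop:JMAA2017-1} already guarantees $\bue\in W^{2,2}(\Omega)$ and that~\eqref{eq-e} holds pointwise a.e., all the manipulations below are justified; the $\vep$-dependent estimate~\eqref{est-eps-1} enters only in this qualitative way and the resulting bound will be $\vep$-uniform.

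Starting from the algebraic identity obtained in the proof of Proposition~\ref{prop:JMAA2017-1}, namely
$\partial_{33}S^\vep_{i3}(\bD\bue)\,\partial_3 D_{33}\bue+2\,\partial_{3\alpha}S^\vep_{i3}(\bD\bue)\,\partial_3 D_{3\alpha}\bue=\mathfrak f_i$ for $i=1,2,3$ a.e.~in $\Omega_P$, we multiply by $\xi_P^2\,\partial_3\widehat{D}_{i3}\bue$, sum over $i$, and integrate. Using the symmetries recorded in Remark~\ref{rem:delta0}(iii), the left-hand side becomes $\int_\Omega\xi_P^2\sum_{i,j,k,l}\partial_{kl}S^\vep_{ij}(\bD\bue)\,Q_{ij}Q_{kl}\,d\bx$ with $\bQ=\partial_3\overline{\bD}\bue$, and the coercivity~\eqref{eq:ass_S} combined with Proposition~\ref{prop:equivalence} bounds it below by
\begin{equation*}
  c\int_\Omega \xi_P^2\bigl(\vep|\partial_3\bD\bue|^2+|\partial_3\bF(\bD\bue)|^2\bigr)d\bx.
\end{equation*}
The right-hand side is dominated by $\int_\Omega\xi_P^2|\boldsymbol{\mathfrak f}||\partial_3\bD\bue|\,d\bx$, so that the split Young inequality $ab\leq\eta(\vep+\phi'')b^2+c_\eta a^2/(\vep+\phi'')$ followed by absorption of the $\eta$-term reduces everything to controlling $\int_\Omega\xi_P^2|\boldsymbol{\mathfrak f}|^2/(\vep+\phi''(|\bD\bue|))\,d\bx$.

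Using the pointwise bound $|\boldsymbol{\mathfrak f}|\leq c(|\ff|+(\vep+\phi''(|\bD\bue|))(|\partial_\tau\nabla\bue|+\|\nabla a_P\|_\infty|\nabla^2\bue|))$ established in the proof of Proposition~\ref{prop:JMAA2017-1}, three contributions appear after dividing by $(\vep+\phi'')$ and integrating. In the $\ff$-term $\int_\Omega\xi_P^2|\ff|^2/(\vep+\phi'')\,d\bx$ the hypothesis $\delta>0$ enters crucially: since $(\vep+\phi'')^{-1}\leq c(\delta+|\bD\bue|)^{2-p}$, H\"older's inequality with the conjugate exponents $p'/2$ and $p/(2-p)$ gives the bound $c\,\|\ff\|_{p'}^2\bigl(\int_\Omega\delta^p+\phi(|\bD\bue|)\,d\bx\bigr)^{(2-p)/p}$, which is finite by Proposition~\ref{thm:existence_perturbation} and equivalence~\eqref{eq:equi3}. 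The tangential contribution $\int_\Omega\xi_P^2(\vep+\phi'')|\partial_\tau\nabla\bue|^2\,d\bx$ is controlled by the tangential estimate in~\eqref{est-eps} (after relating $\partial_\tau\nabla\bue$ to $\partial_\tau\bD\bue$, with the skew part and the error term $\partial_\alpha a_P\,\partial_3\nabla\bue$ absorbed by smallness of $r_P$).

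The remaining term $r_P^2\int_\Omega\xi_P^2(\vep+\phi'')|\nabla^2\bue|^2\,d\bx$ is handled by the algebraic decomposition $|\nabla^2\bue|\leq c(|\partial_\tau\nabla\bue|+|\partial_3\bD\bue|)$, which follows from expressing each pure normal second derivative via the identity $\partial_3^2 u^i=2\,\partial_3 D_{3i}\bue-\partial_3\partial_i u^3$. Choosing $C_2$ so small that $r_P^2$ is beaten by the coercivity constant, the $|\partial_3\bD\bue|^2$ piece is absorbed into the left-hand side, while the $|\partial_\tau\nabla\bue|^2$ piece is again controlled by the tangential estimate. The main obstacle in executing this plan is the careful bookkeeping of constants at each absorption step: every Young-type splitting and each reduction $(\vep+\phi'')^{-1}\leq c(\delta,\bD\bue)$ must remain independent of $\vep$, so that the division by $(\vep+\phi'')$ exploits only $\delta>0$ and the smallness threshold $C_2$ depends only on the characteristics of $\bS$. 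Once this is ensured, combining the above three estimates yields the claimed $\vep$-uniform bound and completes the proof.
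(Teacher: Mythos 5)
Your starting point is too weak to yield the claimed estimate. Testing the reduced identity $\sum_{k}\partial_{k3}S^\vep_{i3}(\bD\bue)\,\partial_3 D_{k3}\bue+\partial_{3\alpha}S^\vep_{i3}(\bD\bue)\,\partial_3 D_{3\alpha}\bue=\mathfrak f_i$ against $\xi_P^2\,\partial_3\widehat D_{i3}\bue$ produces the quadratic form with $\bQ=\partial_3\overline{\bD}\bue$, whose coercivity lower bound is $\kappa_0\,(\vep+\phi''(|\bD\bue|))\,|\partial_3\overline{\bD}\bue|^2$. Since $\overline{D}_{\alpha\beta}\bue=0$ for $\alpha,\beta=1,2$, this controls only the components $\partial_3D_{i3}\bue$; it does \emph{not} dominate $\vep|\partial_3\bD\bue|^2+|\partial_3\bF(\bD\bue)|^2$, because by Proposition~\ref{prop:equivalence} the latter is equivalent to $(\vep+\phi''(|\bD\bue|))|\partial_3\bD\bue|^2$ with the \emph{full} tensor $\partial_3\bD\bue$, including $\partial_3D_{\alpha\beta}\bue$. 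These missing components are exactly the hard part of the proposition: the paper must estimate the term $I_1=\int_\Omega\xi^2\big(\vep\,\partial_3D_{\alpha\beta}\bue+\partial_3S_{\alpha\beta}(\bD\bue)\big)\partial_3D_{\alpha\beta}\bue\,d\bx$, and the only way it controls the resulting piece $\int_\Omega\xi^2\partial_3S_{\alpha\beta}(\bD\bue)\,\partial_\alpha\partial_{\tau_\beta}u^3_\vep\,d\bx$ uniformly in $\vep$ is the Seregin--Shilkin integration by parts \eqref{eq:trick_integrate}, which moves $\partial_3$ off the stress and replaces it by the tangential derivative $\partial_\alpha S_{\alpha\beta}$. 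Your proposal contains no substitute for this step, so even after repairing the lower bound the argument cannot close.

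Second, your treatment of the right-hand side relies on the pointwise bound $|\boldsymbol{\mathfrak f}|\le c\big(|\ff|+(\vep+\phi''(|\bD\bue|))(|\partial_\tau\nabla\bue|+\|\nabla a\|_\infty|\nabla^2\bue|)\big)$ and hence, after dividing by $\vep+\phi''$, on controlling $\int_\Omega\xi^2\phi''(|\bD\bue|)|\partial_\tau\nabla\bue|^2\,d\bx$. The tangential estimate \eqref{est-eps} only bounds $\vep|\partial_\tau\bD\bue|^2$ and $|\partial_\tau\bF(\bD\bue)|^2\sim\phi''(|\bD\bue|)|\partial_\tau\bD\bue|^2$; the skew-symmetric part of $\partial_\tau\nabla\bue$ is not small in $r_P$ and cannot be absorbed pointwise --- passing from $\bD\bue$ to $\nabla\bue$ would require a Korn inequality with the weight $\phi''(|\bD\bue|)$, which is not available. (The same problem appears in the contribution of $\partial_{\gamma\sigma}S_{i3}(\bD\bue)\partial_3D_{\gamma\sigma}\bue$ inside $\mathfrak f_i$, which after division produces a full-size portion of $|\partial_3\bF(\bD\bue)|^2$ that the left-hand side cannot absorb.) The paper avoids both difficulties by keeping every term expressed through derivatives of $\bD\bue$ via the algebraic identity \eqref{eq:ai} and converting $\partial_\beta\bD\bue=\partial_{\tau_\beta}\bD\bue-\partial_\beta a\,\partial_3\bD\bue$ only at the very end. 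Your estimate of the $\ff$-term using $\delta>0$ and H\"older with exponents $p'/2$ and $p/(2-p)$ is correct and matches the paper, but the two gaps above are fatal to the proposed route.
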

\begin{proof}
  Let us fix an arbitrary point $P\in \partial \Omega$ and a local
  description $a=a_P$ of the boundary and the localization function
  $\xi=\xi_P$ satisfying $(b1)$--\,$(b3)$ and $(\ell 1)$. In the
  following we denote by $C$ constants that depend only on the
  characteristics of $\bS$.  First we observe that, by the results of
  Proposition~\ref{prop:equivalence} there exists a constant $C_0$,
  depending only on the characteristics of $\bS$, such that
  \begin{equation*}
   \frac{1}{C_0}| \partial_3\bF(\bD\bue)|^2\leq
   \mathcal{P}_3(\bD\bue)\qquad \text{a.e.  in }\Omega.
  \end{equation*}
  Thus, we get, using also the symmetry of $\bD\bue$ and $\bS$,
  \begin{equation*}
    \begin{aligned}
     &\epsilon \sum_{j=1}^3\int_\Omega\xi^2|\partial_3 \bD\bue|^2\,d\bx+
     \frac{1}{C_0} \int_\Omega \xi^2 |\partial_3\bF(\bD\bue)|^2\,d\bx
     \\
     &\leq    \int_\Omega\xi^2 \big (\epsilon \,\partial_3
     \bD\bue +\partial_3 \bS( \bD\bue) \big )\cdot \partial_3
     \bD\bue \,d\bx
      \\
      &=\int_\Omega\sum_{i,j=1}^3 \xi^2 \big (\eps \,\partial_3D_{ij}\bue
      +  \partial_3 S_{ij} (\bD\bue)\big )\partial _3 \partial _ju^i \,d\bx
      \\
      &=\int_\Omega\xi^2 \big (\eps \,\partial_3 D_{\alpha\beta}\bue
      +\partial_3 S_{\alpha\beta} (\bD\bue) \big) \partial_3
      D_{\alpha\beta}\bue\,d\bx
      \\
     &\quad + \int_\Omega\xi^2\big(\epsilon \,\partial_3D_{3\alpha}\bue+ \partial_3S_{3\alpha
        }(\bD\bue)\big)\partial _\alpha D_{33}\bue\,d\bx 
      \\
      &\quad +\int_\Omega \sum_{j=1}^3 \xi^2\partial_3\big ( \vep\,
      D_{j3}\bue +S_{j3}(\bD\bue)\big ) \partial_3^2 u^j_\eps\,d\bx
      \\
       &=:I_{1}+I_{2}+I_{3}\,.
    \end{aligned}
  \end{equation*}
\allowdisplaybreaks
  To estimate $I_2$ we multiply and divide by the quantity
  $\sqrt{\phi''(|\bD\bue|)}\not=0$, use Young's inequality and 
  Proposition \ref{prop:equivalence}. This  yields that for all
  $\param>0$ there exists $c_\param>0$ such that 
  \begin{align*}
      |I_2|&\leq \sum_{\alpha
        =1}^2\int_\Omega\xi^2|\partial_3\bS(\bD\bue)|
      |\partial_\alpha\bD\bue|
      \frac{\sqrt{\phi''(|\bD\bue|)}}{\sqrt{\phi''(|\bD\bue|)}}\,d\bx
      \\
      &\quad + \param
      \,\epsilon\int_\Omega\xi^2|\partial_3\bD\bue|^2\,d\bx + c_\param
      \, \epsilon
      \sum_{\alpha=1}^2\int_\Omega\xi^2|\partial_\alpha\bD\bue|^2\,d\bx
      \\
      &\leq \param\int_\Omega\xi^2
      \frac{|\partial_3\bS(\bD\bue)|^2}{\phi''(|\bD\bue|)}\,d\bx+c_\param\sum_{\alpha=1}^2\int_\Omega\xi^2
      \phi''(|\bD\bue|) |\partial_\alpha\bD\bue|^2\,d\bx.
      \\
      &\quad + \param
      \,\epsilon\int_\Omega\xi^2|\partial_3\bD\bue|^2\,d\bx + c_\param
      \, \epsilon
      \sum_{\alpha=1}^2\int_\Omega\xi^2|\partial_\alpha\bD\bue|^2\,d\bx
      \\
      &\leq C \param\int_\Omega\xi^2
      |\partial_3\bF(\bD\bue)|^2\,d\bx+c_\param\sum_{a=1}^2\int_\Omega\xi^2
      |\partial_\alpha\bF(\bD\bue)|^2\,d\bx
      \\
      &\quad + \param
      \,\epsilon\int_\Omega\xi^2|\partial_3\bD\bue|^2\,d\bx + c_\param
      \, \epsilon
      \sum_{\alpha=1}^2\int_\Omega\xi^2|\partial_\alpha\bD\bue|^2\,d\bx\,.
  \end{align*}
  Here and in the following we denote by $c_\param$ constants that may
  depend on the characteristics of $\bS$ and on $\param^{-1}$, while $C$
  denotes constants that may depend on the characteristics of $\bS$ only.

  To treat the third integral $I_3$ we proceed as follows: We use the
  well-known algebraic identity, valid for smooth enough vectors
  $\bv $ and $i,j,k=1,2,3$,
  \begin{align}
    \label{eq:ai}
    \partial_j\partial_kv^i= \partial _j D_{ik}\bv +\partial _k D_{ij}\bv -\partial _i D_{jk}\bv \,,
  \end{align}
  and the equations~\eqref{eq-e} point-wise, which can be written for
  $j=1,2,3$ as,
  \begin{equation*}
    \partial_3\big(\epsilon\, D_{j3}\bue+
    S_{j3}(\bD\bue)\big)=-f^j-\partial_\beta\big (\epsilon\,D_{j\beta}\bue+
    S_{j\beta}(\bD\bue)\big)\qquad \text{a.e. in     }\Omega\,.
  \end{equation*}
  This is possible due to Proposition \ref{prop:JMAA2017-1}.  Hence,
  we obtain
  \begin{equation*}
        |I_3|\leq  \sum _{j=1}^3\left|\int_\Omega\xi^2\big(-f^j
      -\partial_\beta S_{j\beta}(\bD\bue)-\epsilon\partial_\beta
      D_{j\beta}\bue\big)\big (2\partial _3D_{j3}\bue - \partial _j D_{33}\bue\big )\,d\bx\right|\,.
  \end{equation*}
  The right-hand side can be estimated similarly as $I_2$.  This  yields that for all
  $\param>0$ there exists $c_\param>0$ such that 
  estimated by 
  \begin{equation*}
  \begin{aligned}
    |I_3|&\le \int_\Omega\xi^2\big(|\bff|+\sum_{\beta=1}^2
    |\partial_\beta\bS(\bD\bue)|\big) \big (2|\partial _3\bD\bue| +
    \sum_{\alpha=1}^2|\partial _\alpha\bD\bue|\big )
    \frac{\sqrt{\phi''(|\bD\bue|)}}{\sqrt{\phi''(|\bD\bue|)}}\,d\bx
    \\
    &\quad +\param\,
    \epsilon\int_\Omega\xi^2|\partial_3\bD\bue|^2\,d\bx
    +c_\param\,\epsilon
    \sum_{\beta=1}^2\int_\Omega\xi^2|\partial_\beta\bD\bue|^2\,d\bx
    \\
    & \leq\param\, C \!\int_\Omega\xi^2|\partial_3\bF(\bD\bue)|^2\,d\bx
    +c_\param\sum_{\beta=1}^2\int_\Omega\xi^2|\partial_\beta\bF(\bD\bue)|^2\,d\bx
    +\param \,\epsilon\!\int_\Omega\xi^2|\partial_3\bD\bue|^2\,d\bx 
   \\
    &\quad
    +c_\param \,\epsilon \sum_{\beta=1}^2\int_\Omega\xi^2|\partial_\beta\bD\bue|^2\,d\bx 
    + c_\param\int_\Omega
    \frac{\xi^2|\bff|^2}{\phi''(|\bD\bue|)}\,d\bx
    \\
    & \leq\param\, C \!\int_\Omega\xi^2|\partial_3\bF(\bD\bue)|^2\,d\bx
    +c_\param\sum_{\beta=1}^2\int_\Omega\xi^2|\partial_\beta\bF(\bD\bue)|^2\,d\bx
    +\param \,\epsilon\! \int_\Omega\xi^2|\partial_3\bD\bue|^2\,d\bx 
   \\
    &\quad
    +c_\param \,\epsilon \sum_{\beta=1}^2\int_\Omega\xi^2|\partial_\beta\bD\bue|^2\,d\bx 
    +c_\param\big(\|\bff\|_{p'}^{p'}+\|\bD\bue\|_p^p +\delta^p\,\big)\,. 
  \end{aligned}
\end{equation*}
Observe that we used $p\leq2$ to
estimate the term involving $\bff$.

To estimate $I_{1}$ we employ the algebraic
identity~\eqref{eq:ai} to split the integral as follows
\begin{equation*}
  \begin{aligned}
    I_{1}&=\int_\Omega\xi^2 \big ( \eps \,\partial_3D_{\alpha\beta}\bue
    +\partial_3S_{\alpha\beta}(\bD\bue)\big )
    \big(\partial_\alpha D_{3\beta}
    \bue+ \partial_\beta D_{3\alpha}\bue\big)\,d\bx
    \\
    &\quad -\int_\Omega\xi^2\big (\eps \,\partial_3D_{\alpha\beta}\bue
    + \partial_3S_{\alpha\beta}(\bD\bue)\big )\partial_{\beta}\partial_{\alpha}u_\vep^3\,d\bx 
    \\
    &=:A+B\,.
  \end{aligned}
\end{equation*}
The first term is estimated similarly as $I_2$, yielding  for all $\param>0$ 
\begin{equation*}
  \begin{aligned}
    |A|
    &\leq C \param\int_\Omega\xi^2
    |\partial_3\bF(\bD\bue)|^2\,d\bx+c_\param\sum_{\beta=1}^2\int_\Omega\xi^2
    |\partial_\beta\bF(\bD\bue)|^2\,d\bx
    \\
    &\quad + \param \,\epsilon\int_\Omega\xi^2|\partial_3\bD\bue|^2\,d\bx 
    + c_\param\,\epsilon
    \sum_{\beta=1}^2\int_\Omega\xi^2|\partial_\beta\bD\bue|^2\,d\bx \,.
  \end{aligned}
\end{equation*}
To estimate $B$ we observe that by the definition of the tangential
derivative we have
\begin{equation*}
  \partial_{\alpha}\partial_{\beta}  u_\vep^{3}=\partial_{\alpha}\partial_{\tau_{\beta}}u_\vep^{3}- 
  (\partial_{\alpha}\partial_{\beta}\,a)\ D_{33}\bue-(\partial_{\beta }\,a)\
  \partial_{\alpha} D_{33}\bue,
\end{equation*}
and consequently the term $B$ can be split into the following three
terms: 
\begin{equation*}
  \begin{aligned}
    &-\!\int_\Omega\!\xi^2 \big ( \eps \,\partial_3D_{\alpha\beta}\bue
    +\partial_3S_{\alpha\beta}(\bD\bue)\big )\!
    \left(\partial_{\alpha}\partial_{\tau_{\beta}}u_\vep^{3}- 
      (\partial_{\alpha}\partial_{\beta}a) D_{33}\bue-(\partial_{\beta }a)
      \partial_{\alpha} D_{33}\bue\right)\,d\bx
    \\
    &=:B_{1}+B_{2}+B_{3}\,.
  \end{aligned}
\end{equation*}
We estimate $B_{2}$ as follows
\begin{equation*}
  \begin{aligned}
    | B_{2}|&\leq
    \int_\Omega\xi^2|\partial_3\bS(\bD\bue)| 
    |\nabla^{2}a| |\bD\bue|
    \frac{\sqrt{\phi''(|\bD\bue|)}}{\sqrt{\phi''(|\bD\bue|)}} + \eps \,\xi^2
    \,|\partial_3\bD\bue|     |\nabla^{2}a| |\bD\bue|\,d\bx
    \\
    &\leq \param \int_\Omega\xi^2
    \frac{{|\partial_3\bS(\bD\bue)|^2}}{{\phi''(|\bD\bue|)}}\,d\bx
    +c_{\param}\|\nabla^{2}a\|_{\infty}^2\int_\Omega\xi^2
    |\bD\bue|^{2} \phi''(|\bD\bue|)\,d\bx
    \\
    &\quad + \param \,\epsilon\int_\Omega\xi^2|\partial_3\bD\bue|^2\,d\bx 
    +c_\param\, \epsilon \,\|\nabla^{2}a\|_{\infty}^2\int_\Omega\xi^2|\bD\bue|^2\,d\bx 
    \\
    &\leq \param C\int_\Omega\xi^2|\partial_{3}\bF(\bD\bue)|^{2}\,d\bx
    +c_{\param}\|\nabla^{2}a\|_{\infty}^2\rho_{\phi}(|\bD\bue|)
    \\
    &\quad + \frac {\epsilon}{8}\int_\Omega\xi^2|\partial_3\bD\bue|^2\,d\bx 
    + 2\,\epsilon \,\|\nabla^{2}a\|_{\infty}^2\|\bD\bue\|_2^2 \,.
\end{aligned}
\end{equation*}
The term  $B_{3}$ is estimated similarly as $I_2$, yielding  for all
$\param>0$  
\begin{equation*}
  \begin{aligned}
   | B_{3}|
    &\leq \param C
    \int_\Omega\xi^2|\partial_{3}\bF(\bD\bue)|^{2}\,d\bx+c_{\param}\|\nabla
    a\|_{\infty}^2\sum_{\beta=1}^2     \int_\Omega\xi^2
    |\partial_{\beta}\bF(\bD\bue)|^{2}\,d\bx
    \\
    &\quad + \param\,\epsilon\int_\Omega\xi^2|\partial_3\bD\bue|^2\,d\bx 
    + c_\param\,\epsilon \,\|\nabla
    a\|_{\infty}^2\sum_{\beta=1}^2  
    \sum_{\beta=1}^2\int_\Omega\xi^2|\partial_\beta\bD\bue|^2\,d\bx \,.
\end{aligned}
\end{equation*}
Concerning the term $B_1$, we would like to perform some integration
by parts, which is one of the crucial observations we are adapting
from~\cite{SS00}. Neglecting the localization  $\xi$ in $B_1$ we would
like to use that 
\begin{equation}
  \label{eq:trick_integrate}
  \int_\Omega \partial_3S^\eps_{\alpha\beta}(\bD\bue)
  \partial_{\alpha}\partial_{\tau_{\beta}}u_\vep^{3}\,d\bx=\int_\Omega\partial_\alpha S^\eps_{\alpha\beta}(\bD\bue)
  \partial_{3}\partial_{\tau_{\beta}}u_\vep^{3}\,d\bx\,.
\end{equation}
This formula can be justified by using an appropriate approximation,
that exists for $\bue\in W^{1,2}_0(\Omega)\cap W^{2,2}(\Omega)$ since
$\partial_{\tau}\bue=\bfzero$ on $\partial \Omega$. More precisely, to
treat the term $B_1$ we use that the solution $\bue$ of~\eqref{eq-e}
belongs to $W^{1,2}_0(\Omega)\cap W^{2,2}(\Omega)$. Thus,
$\partial_{\tau}\big ( {\bue} _{|\Omega_P}\big )=\bfzero$ on
$\partial\Omega_P\cap\partial\Omega$, hence
$\xi_P\partial_{\tau}(\bue^3)=\bfzero$ on $\partial\Omega$.  This
implies that we can find a sequence
$(\boldsymbol {\mathcal{S}}_n,\boldsymbol{\mathcal{U}}_n)\in
C^\infty(\Omega)\times C^\infty_0(\Omega)$ such that
$(\boldsymbol{\mathcal{S}}_n,\boldsymbol{\mathcal{U}}_n)\to(\bS^\epsilon,\partial_\tau\bue)$
in $W^{1,2}(\Omega)\times W^{1,2}_0(\Omega)$ and perform calculations
with $(\boldsymbol{\mathcal{S}}_n,\boldsymbol{\mathcal{U}}_n)$,
showing then that all formulas of integration by parts are
valid. Passage to the limit as $n\to+\infty$ is done only in the last
step. For simplicity we drop the details of this well-known argument
(sketched also in~\cite{SS00}) and we write directly formulas without
this smooth approximation.  Thus, performing several integrations by
parts, we get
\begin{equation*}
  \begin{aligned}
     & \int_\Omega\xi^2\partial_3S_{\alpha\beta}(\bD\bue)
    \partial_{\alpha}\partial_{\tau_{\beta}}u_\vep^{3}\,d\bx
    \\
    &=
    \int_\Omega \big(\partial _{\alpha}\xi^2\big) S_{\alpha\beta}(\bD\bue)
    \partial_3 \partial_{\tau_{\beta}}u_\vep^{3}\,d\bx 
    -\int_\Omega \big(\partial_{3}\xi^2\big) S_{\alpha\beta}(\bD\bue)
    \partial_\alpha\partial_{\tau_{\beta}}u_\vep^{3}\,d\bx
    \\
    &\quad+\int_\Omega
    \xi^2\partial_\alpha S_{\alpha\beta}(\bD\bue)
    \partial_3\partial_{\tau_{\beta}}u_\vep^{3}\,d\bx
  \end{aligned}
\end{equation*}
and 
\begin{equation*}
  \begin{aligned}
     & \eps \int_\Omega\,\xi^2\partial_3D_{\alpha\beta}\bue
    \partial_{\alpha}\partial_{\tau_{\beta}}u_\vep^{3}\,d\bx
    \\
    &=
    \eps \int_\Omega \big(\partial _{\alpha}\xi^2\big) D_{\alpha\beta}\bue
    \partial_3\partial_{\tau_{\beta}}u_\vep^{3}\,d\bx
- \eps \int_\Omega \big(\partial_{3}\xi^2\big) D_{\alpha\beta}\bue
   \partial_\alpha \partial_{\tau_{\beta}}u_\vep^{3}\,d\bx
    \\
    &\quad+\eps \int_\Omega
    \xi^2\partial_\alpha D_{\alpha\beta}\bue
    \partial_3\partial_{\tau_{\beta}}u_\vep^{3}\,d\bx\,.
  \end{aligned}
\end{equation*}
This shows that 
\begin{equation*}
\begin{aligned}
B_1    &=
    \int_\Omega 2\xi \partial_{\alpha}\xi \,S_{\alpha\beta}(\bD\bue)
    \partial_3    \partial_{\tau_{\beta}}u_\vep^{3}\,d\bx
    -\int_\Omega 2\xi\partial_{3}\xi\, S_{\alpha\beta}(\bD\bue)
    \partial_\alpha\partial_{\tau_{\beta}}u_\vep^{3}\,d\bx
    \\
    &\quad+\int_\Omega
    \xi^2\partial_\alpha S_{\alpha\beta}(\bD\bue)
    \partial_3\partial_{\tau_{\beta}}u_\vep^{3}\,d\bx
    + \eps     \int_\Omega 2\xi \partial_{\alpha}\xi \,D_{\alpha\beta}\bue
    \partial_3    \partial_{\tau_{\beta}}u_\vep^{3}\,d\bx
    \\
    &\quad -\eps \int_\Omega 2\xi\partial_{3}\xi\, D_{\alpha\beta}\bue
    \partial_\alpha\partial_{\tau_{\beta}}u_\vep^{3}\,d\bx
    + \eps \int_\Omega
    \xi^2\partial_\alpha D_{\alpha\beta}\bue
    \partial_3\partial_{\tau_{\beta}}u_\vep^{3}\,d\bx
    \\
    &=:B_{1,1}+B_{1,2}+B_{1,3}+B_{1,4}+B_{1,5}+B_{1,6}\,.
  \end{aligned}
\end{equation*}
To estimate $B_{1,1}, B_{1,3}, B_{1,4}, B_{1,6}$ we observe that
\begin{equation*}
\partial_3\partial_{\tau_{\beta}}u_\vep^{3}=\partial_{\tau_{\beta}}\partial_3u_\vep^{3}
=\partial_{\tau_{\beta}}D_{33}\bue\,. 
\end{equation*}
By using Young inequality, the growth properties of $\bS$ in~\eqref{eq:hammere} and
\eqref{eq:tang} we get
\begin{equation*}
  \begin{aligned}
    \left|   B_{1,1}\right|&\leq
    \|\nabla\xi\|_{\infty}^2\int_\Omega\frac{|\bS(\bD\bue)|^2}{\phi''(|\bD\bue|)}\,d\bx
    +C\sum_{\beta=1}^2\int_\Omega\xi^2 \phi''(|\bD\bue|)|\partial_{\tau_{\beta}}\bD\bue|^2\,d\bx
    \\
    &\leq
    \|\nabla\xi\|^2_{\infty}\rho_\phi(|\bD\bue|)+C\sum_{\beta=1}^2\int_\Omega\xi^2|\partial_{\tau_\beta}
    \bF(\bD\bue)|^2\,d\bx
  \end{aligned}
\end{equation*}
and 
\begin{equation*}
  \begin{aligned}
    \left| B_{1,3}\right|&\leq \sum_{\beta=1}^2 \int_\Omega
    \xi^2\frac{|\partial_\beta\bS_{\alpha\beta}(\bD\bue)|^2}{\phi''(|\bD\bue|)}\,d\bx+
    \sum_{\beta=1}^2 \int_\Omega
    \xi^2\phi''(|\bD\bue|)|\partial_{\tau_{\beta}}\bD\bue|^2\,d\bx
    \\
    & \leq C\sum_{\beta=1}^2 \int_\Omega\xi^2
    |\partial_{\beta}\bF(\bD\bue)|^{2}
    +\xi^2|\partial_{\tau_{\beta}}\bF(\bD\bue)|^{2}\,d\bx\,.
  \end{aligned}
\end{equation*}
Similarly we get 
\begin{equation*}
  \begin{aligned}
    \left|   B_{1,4}\right|&\leq
    C\, \eps  \|\nabla\xi\|^2_{\infty}\|\bD\bue\|_2^2+  C\, \eps \sum_{\beta=1}^2\int_\Omega\xi^2|\partial_{\tau_\beta}
    \bD\bue|^2\,d\bx
  \end{aligned}
\end{equation*}
and 
\begin{equation*}
  \begin{aligned}
    \left| B_{1,6}\right|
    & \leq C\,\eps  \sum_{\beta=1}^2 \int_\Omega\xi^2
    |\partial_{\beta}\bD\bue|^{2}
    +\xi^2|\partial_{\tau_{\beta}}\bD\bue|^{2}\,d\bx\,.
  \end{aligned}
\end{equation*}

To estimate $B_{1,2}$ and $B_{15}$ we observe that, using the
algebraic identity~\eqref{eq:ai} and
the defintion of the tangential derivative,  
\begin{equation*}
  \begin{aligned}
    \partial_\alpha \partial_{\tau_{\beta}}u_\vep^{3}&=\partial_\alpha(\partial_\beta
    \bue^3+\partial_\beta a\ \partial_3u_\vep^3)
    \\
    &=\partial_{\alpha}\partial _{\beta}u_\vep^3+\partial_{\alpha}\partial_{\beta} a\
    D_{33}\bue+\partial_\beta a\ \partial_\alpha    D_{33}\bue
    \\
    &=\partial_\alpha D_{\beta3}\bue+\partial_\beta
    D_{\alpha3}\bue-\partial_3
    D_{\alpha\beta}\bue+\partial_{\alpha}\partial_{\beta} a\ 
    D_{33}\bue+\partial_\beta a\ \partial_\alpha    D_{33}\bue\,.
  \end{aligned}
\end{equation*}
Hence by substituting and again by the same inequalities as before we
arrive to the following estimates
\begin{equation*}
  \begin{aligned}
    |B_{1,2}|&\leq \param C \int_\Omega    \xi^2|\partial_3\bF(\bD\bue)|^2\,d\bx
    +C \big(1+\|\nabla a\|^2_\infty\big )\sum_{\beta=1}^2   \int_\Omega\xi^2
    |\partial_{\beta}\bF(\bD\bue)|^{2} \,d\bx
    \\
    &\quad +    c_\param(1+\|\nabla^2a\|_{\infty}) \|\nabla\xi\|_\infty^2
    \rho_\phi(|\bD\bue|)\,, 
    \\
    |B_{1,5}|&\leq \param\, \eps   \int_\Omega    \xi^2|\partial_3\bD\bue|^2\,d\bx
    +c_\param \big(1+\|\nabla a\|^2_\infty\big )\sum_{\beta=1}^2  \eps  \int_\Omega\xi^2
    |\partial_{\beta}\bD\bue|^{2} \,d\bx
    \\
    &\quad +    c_\param (1+\|\nabla^2a\|_{\infty}) \|\nabla\xi\|_\infty^2
    \eps \|\bD \bue\|_2^2\,. 
  \end{aligned}
\end{equation*}
Collecting all estimates and using that $\norm{\nabla a}_\infty \le r_P \le
1$, we finally obtain
\begin{equation*}
  \begin{aligned}
    \epsilon& 
    \int_\Omega\xi^2|\partial_3\bD\bue|^2\,d\bx +\frac
    1{C_0}\int_\Omega\xi^2|\partial_3\bF(\bD\bue)|^2\,d\bx
    \\
    &
    \leq \param\, \epsilon\int_\Omega\xi^2|\partial_3\bD\bue|^2\,d\bx+\param\, 
    C \int_\Omega\xi^2|\partial_3\bF(\bD\bue)|^2\,d\bx
    \\
    & 
    \quad +c_\param 
    \sum_{\beta=1}^2
    \int_\Omega\xi^2 |\partial_{\beta}\bF(\bD\bue)|^{2}
    +\xi^2|\partial_{\tau_{\beta}}\bF(\bue)|^{2}\,d\bx  +c_\param\,\epsilon\sum_{\beta=1}^2\int_\Omega\xi^2|\partial_\beta\bD\bue|^2\,d\bx
     \\
    &\quad+c_\param\big (1+ \norm{\nabla ^2 a}^2_\infty
    +(1+\|\nabla^{2}a\|_{\infty}^2)\|\nabla\xi\|_{\infty}^2\big)\big(\|\bff\|_{p'}^{p'}
  +\rho_\phi(|\bD\bue|)+\rho_\phi(\delta)\big) 
  \\
  &\quad + c_\param \big (1+ \norm{\nabla ^2 a}^2_\infty
    +(1+\|\nabla^{2}a\|_{\infty}^2)\|\nabla\xi\|_{\infty}^2\big)\,\|\bD
    \bue \|_{2}^{2}\,.
    \end{aligned}
\end{equation*}
The quantities that are bounded uniformly in $L^2(\Omega_P)$ are the
tangential derivatives of $\epsilon\,\bD\bue$ and of $\bF(\bD\bue)$. By
definition we have 
\begin{equation*}
  \begin{aligned}
    & \partial_\beta
    \bD\bue=\partial_{\tau_\beta}\bD\bue-\partial_\beta
    a\,\partial_3\bD\bue,
    \\  
    &
    \partial_\beta
    \bF(\bD\bue)=\partial_{\tau_\beta}\bF(\bD\bue)-\partial_\beta
    a\,\partial_3\bF(\bD\bue),
  \end{aligned}
\end{equation*}
and if we substitute we obtain
\begin{equation*}
  \begin{aligned}
    &\epsilon \int_\Omega\xi^2|\partial_3\bD\bue|^2\,d\bx+
    \frac 1{C_0}\int_\Omega\xi^2|\partial_3\bF(\bD\bue)|^2\,d\bx
    \\
    &\leq \epsilon\big(\param\!+\!4\,\|\nabla
    a\|_\infty^2\big)\!\int_\Omega\!\xi^2|\partial_3\bD\bue|^2d\bx
    +\big(\param\,
    C+c_\param\|\nabla a\|_\infty^2\big)\! \int_\Omega\!\xi^2|\partial_3\bF(\bD\bue)|^2d\bx
    \\
    & \quad +c_\param
    \sum_{\beta=1}^2
    \int_\Omega\xi^2|\partial_{\tau_{\beta}}\bF(\bue)|^{2}\,d\bx 
    +c_\param\,\epsilon\sum_{\beta=1}^2\int_\Omega\xi^2|\partial_{\tau _\beta}\bD\bue|^2\,d\bx 
    \\
    &\quad+c_\param\big (1+ \norm{\nabla ^2 a}^2_\infty
    +(1+\|\nabla^{2}a\|_{\infty}^2)\|\nabla\xi\|_{\infty}^2\big)\big(\|\bff\|_{p'}^{p'} 
    +\rho_\phi(|\bD\bue|)+\rho_\phi(\delta)\big) 
  \\
  &\quad + c_\param\big (1+ \norm{\nabla ^2 a}^2_\infty
    +(1+\|\nabla^{2}a\|_{\infty}^2)\|\nabla\xi\|_{\infty}^2\big)\,\|\bD
    \bue \|_{2}^{2}\,.
    \end{aligned}
\end{equation*}
By choosing first $\param>0$ small enough such that $\lambda\,C<4^{-1}C_0 $ and then
choosing in the local description of the boundary $R=R_P$ small enough such that
$c_\lambda\|\nabla a\|_\infty<4^{-1}C_0 $, we can absorb the first two terms from the
right-hand side into the left-hand side to obtain
\begin{equation*}
  \begin{aligned}
    &\epsilon \int_\Omega\xi^2|\partial_3\bD\bue|^2\,d\bx+
    \frac 1{C_0}\int_\Omega\xi^2|\partial_3\bF(\bD\bue)|^2\,d\bx
    \\
    &\leq c_\param
    \sum_{\beta=1}^2
    \int_\Omega\xi^2|\partial_{\tau_{\beta}}\bF(\bue)|^{2}\,d\bx 
    +c_\param\,\epsilon\sum_{\beta=1}^2\int_\Omega\xi^2|\partial_{\tau _\beta}\bD\bue|^2\,d\bx 
    \\
    &\quad+c_\param\big (1+ \norm{\nabla ^2 a}^2_\infty
    +(1+\|\nabla^{2}a\|_{\infty}^2)\|\nabla\xi\|_{\infty}^2\big)\big(\|\bff\|_{p'}^{p'} 
    +\rho_\phi(|\bD\bue|)+\rho_\phi(\delta)\big) 
  \\
  &\quad + c_\param\big (1+ \norm{\nabla ^2 a}^2_\infty
    +(1+\|\nabla^{2}a\|_{\infty}^2)\|\nabla\xi\|_{\infty}^2\big)\,\|\bD
    \bue \|_{2}^{2}\,,
    \end{aligned}
\end{equation*}
where now $c_\param$ depends on the fixed paramater $\param$, the
characteristics of $\bS$ and on $C_2$. 
The right-hand side is bounded uniformly with respect to $\epsilon>0$, due to
Proposition~\ref{prop:JMAA2017-1}, proving the assertion of the
proposition. 
\end{proof}
Choosing now an appropriate finite covering of the boundary (for the
details see also~\cite{br-reg-shearthin}),
Propositions~\ref{prop:JMAA2017-1}-\ref{prop:main} yield the
following result:
\begin{theorem}
\label{thm:estimate_for_ue}
Let the same hypotheses as in Theorem~\ref{thm:MT} with $\delta>0$ be satisfied. Then, it
holds
  \begin{equation*}
    \epsilon\|\nabla\bD\bue\|^2_2  +
    \| \nabla \bF(\bD\bue)\|^2_2\leq
    C(\norm{\ff}_{p'},\delta, \partial \Omega)\,.
  \end{equation*}
\end{theorem}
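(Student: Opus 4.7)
The plan is to assemble the theorem from Propositions \ref{prop:JMAA2017-1} and \ref{prop:main} by a standard localization/covering argument, using the compactness of $\partial\Omega$. First, since $\partial\Omega$ is compact and $C^{2,1}$, it can be covered by finitely many sets of the form $\tfrac12\Omega_{P_i}$ (with $P_i\in\partial\Omega$) whose local descriptions $a_{P_i}$ satisfy $(b1)$–$(b3)$. By possibly shrinking each $R_{P_i}$ we may arrange that $r_{P_i}<\min(C_1,C_2)$, so that both Proposition \ref{prop:JMAA2017-1} and Proposition \ref{prop:main} apply on each $\Omega_{P_i}$ with the associated cut-off $\xi_{P_i}$ satisfying $(\ell 1)$. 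Adding one interior set $\Omega_{00}$ with $\dist(\partial\Omega_{00},\partial\Omega)>0$ and a cut-off $\xi_{00}$ with $\spt\xi_{00}\subset\Omega_{00}$, we may moreover assume that $\{\tfrac12\Omega_{P_i}\}_i\cup\{\spt\xi_{00}\}$ covers $\overline{\Omega}$.

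Next, on the interior piece I would apply the first inequality of \eqref{est-eps} to $\xi_{00}$, which yields
\begin{equation*}
\epsilon\int_\Omega \xi_{00}^2|\nabla^2\bue|^2 + \xi_{00}^2|\nabla\bF(\bD\bue)|^2\,d\bx \le c(\|\ff\|_{p'},\|\xi_{00}\|_{2,\infty},\delta).
\end{equation*}
On each boundary piece, the second inequality of \eqref{est-eps} controls the two tangential directions,
\begin{equation*}
\epsilon\int_\Omega \xi_{P_i}^2|\partial_\tau\bD\bue|^2+\xi_{P_i}^2|\partial_\tau\bF(\bD\bue)|^2\,d\bx \le c_i,
\end{equation*}
while Proposition \ref{prop:main} controls the remaining direction,
\begin{equation*}
\epsilon\int_\Omega \xi_{P_i}^2|\partial_3\bD\bue|^2+\xi_{P_i}^2|\partial_3\bF(\bD\bue)|^2\,d\bx \le C_i.
\end{equation*}
Using the identity \eqref{eq:1}, namely $\partial_\alpha=\partial_{\tau_\alpha}-\partial_\alpha a\,\partial_3$ with $\|\nabla a_{P_i}\|_\infty<1$, I can convert the bounds on $\partial_\tau$ and $\partial_3$ into bounds on the full Cartesian derivatives $\partial_j\bD\bue$ and $\partial_j\bF(\bD\bue)$, $j=1,2,3$, on $\spt\xi_{P_i}\cap\Omega$.

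Finally, summing the interior estimate and the finitely many boundary estimates, and using the covering property
\begin{equation*}
1 \le C\Bigl(\xi_{00}^2+\sum_{i}\xi_{P_i}^2\Bigr) \qquad\text{on }\overline{\Omega},
\end{equation*}
(which is ensured by $(\ell 1)$ and the choice of $\xi_{00}$) gives the global bound
\begin{equation*}
\epsilon\|\nabla\bD\bue\|_2^2 + \|\nabla\bF(\bD\bue)\|_2^2 \le C(\|\ff\|_{p'},\delta,\partial\Omega).
\end{equation*}
There is no real obstacle at this stage: the hard analytic work is already encapsulated in Propositions \ref{prop:JMAA2017-1} and \ref{prop:main}. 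The only points requiring a little care are (a) ensuring simultaneous validity of the smallness conditions $r_P<\min(C_1,C_2)$ on all patches (handled by shrinking the $R_{P_i}$), and (b) the conversion between tangential and Cartesian partial derivatives via \eqref{eq:1}, which contributes only bounded factors into the final constant.
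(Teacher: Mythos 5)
Your proposal is correct and is exactly the argument the paper intends: the paper's own ``proof'' of Theorem~\ref{thm:estimate_for_ue} is the one-line remark that an appropriate finite covering of $\partial\Omega$ together with Propositions~\ref{prop:JMAA2017-1} and~\ref{prop:main} yields the result, and you have simply filled in the standard details (simultaneous smallness $r_{P_i}<\min(C_1,C_2)$, recovery of Cartesian derivatives from tangential plus normal ones via~\eqref{eq:1}, and summation over the cover). No gaps.
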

\subsection{Passage to the limit}
Once this has been proved, by means of appropriate limiting process we
can show that the estimate is inherited by
$\bu=\lim_{\epsilon\to0}\bue$, since $\bu$ is the unique solution
to the boundary value problem~\eqref{eq}. 
We can now give the proof of the main result
\begin{proof}[Proof (of Theorem~\ref{thm:MT})]
  Let us firstly assume that $\delta >0$. From Proposition
  \ref{thm:existence_perturbation}, Proposition \ref{lem:hammer} and
  Theorem \ref{thm:estimate_for_ue} we know that $\bF(\bD\bue)$ is
  uniformly bounded with respect to $\vep$ in $W^{1,2}(\Omega)$. This
  also implies (cf.~\cite[Lemma 4.4]{bdr-7-5}) that $\bue$ is
  uniformly bounded with respect to $\vep $ in $W^{2,p}(\Omega)$. The
  properties of $\bS$ and Proposition \ref{thm:existence_perturbation}
  also yield that $\bS(\bD\bue)$ is uniformly bounded with respect to
  $\vep $ in $L^{p'}(\Omega)$.  Thus, there exists a subsequence
  $\{\epsilon_n\}$ (which converges to $0$ as $n\to+\infty)$,
  $\bu \in W^{2,p}(\Omega)$, $\widetilde{\bF}\in W^{1,2}(\Omega)$, and
  $\boldsymbol {\chi} \in L^{p'}(\Omega)$ such that
  \begin{equation*}
    \begin{aligned}
      \buen&\rightharpoonup \bu&&\text{in }W^{2,p}(\Omega)\cap
      W^{1,p}_0(\Omega)\,,
      \\
      \bD\buen&\to\bD\bu\quad&&\text{a.e. in }\Omega\,,
      \\
      \bF(\bD\buen) &\rightharpoonup \widetilde{\bF}&&\text{in
      }W^{1,2}(\Omega)\,,
      \\
      \bS(\bD\buen) &\rightharpoonup \boldsymbol \chi&&\text{in
      }L^{p'}(\Omega)\,.
    \end{aligned}
  \end{equation*}
  The continuity of $\bS$ and $\bF$ and the classical result stating that
  the weak limit and the a.e.~limit in Lebesgue spaces coincide
  (cf.~\cite{GGZ}) imply that
  \begin{equation*}
    \begin{aligned}
      \widetilde{\bF} =\bF(\bD\bu) \qquad \text{and}\qquad
      \boldsymbol \chi = \bS(\bD\bu)\,.
    \end{aligned}
  \end{equation*}
  These results enable us to pass to the limit in the weak formulation
  of the perturbed problem~\eqref{eq-e}, which yields
  \begin{equation*}
    \int_\Omega \bS(\bD\bu)\cdot\bD\bv\,d\bx
    =\int_\Omega\bff\cdot\bv\,d\bx\qquad\forall\,\bv \in
    C^{\infty}_0(\Omega)\,,
  \end{equation*}
  where we also used that $\lim _{\vep_n \to 0} \int_\Omega \vep_n
  \bD\buen \cdot \bD\bv\, d\bx =0$. By density we thus know that $\bu$
  is the unique weak solution of problem~\eqref{eq}.
Finally the lower semi-continuity of the norm implies that
\begin{equation*}
  \int_{\Omega}|\nabla\bF(\bD\bu)|^2\,d\bx\leq\liminf_{\epsilon_n\to0}
  \int_{\Omega}|\nabla\bF(\bD\buen)|^2\,d\bx\leq c, 
\end{equation*}
ending the proof in the case $\delta>0$.

Let us now assume that $\delta=0$. Proposition~\ref{prop:JMAA2017-1} and
Proposition~\ref{prop:main} are valid only for $\delta>0$ and thus cannot be used directly
for the case that $\bS$ has $(p,\delta)$-structure with $\delta=0$. However, it is proved
in~\cite[Section 3.1]{bdr-7-5} that for any stress tensor with $(p,0)$-structure $\bS$,
there exist stress tensors $\bS^\kappa$, having $(p,\kappa)$-structure with $\kappa>0$,
and approximating $\bS$ in an appropriate way. Thus we approximate~\eqref{eq-e} by the
system
\begin{equation*}
  \begin{aligned}
    -\divo \bfS^{\vep,\kappa} (\bfD\bu_{\eps ,\kappa})&=\bff\qquad&&\text{in
    }\Omega\,,
    \\
    \bfu &= \bfzero &&\text{on } \partial \Omega\,,
  \end{aligned}
\end{equation*}
where 
  \begin{equation*}
    \bS^{\epsilon,\kappa}(\bQ):=\epsilon \,\bQ +
    \bS^\kappa(\bQ),\qquad\text{with }\epsilon>0\,,\, \kappa\in (0,1)\,.
  \end{equation*}
  For fixed $\kappa>0$ we can use the above theory and use that fact that the estimates
  are uniformly in $\kappa$ to pass to the limit as $\epsilon\to0$. Thus, we obtain that
  for all $\kappa \in (0,1)$ there exists a unique $\bu_\kappa \in W^{1,p}_0(\Omega)$
  satisfying for all $\bv \in W^{1,p}_0(\Omega)$
  \begin{equation*}
    \int_\Omega \bS^\kappa(\bD\bu_\kappa)\cdot\bD\bv\,d\bx=\int_\Omega \bff\cdot
    \bv\,d\bx
  \end{equation*}
and 
\begin{equation}
  \label{eq:fin}
  \int_{\Omega}|\bF^\kappa(\bD\bu_\kappa)|^2+|\nabla\bF^\kappa(\bD\bu_\kappa)|^2\,d\bx\leq c
  (\norm{\ff}_{p'}, \partial \Omega) \,, 
\end{equation}
where the constant is independent of $\kappa\in(0,1)$ and $\bF^\kappa\colon\setR^{3 \times
  3} \to \setR^{3 \times 3}_\sym$ is defined through
\begin{equation*}
  \bF^\kappa(\bP):= \big (\kappa+\abs{\bP^\sym} \big )^{\frac
    {p-2}{2}}{\bP^\sym } \,.
\end{equation*}
Now we can proceed as in~\cite{bdr-7-5}. Indeed, from~\eqref{eq:fin} and the properties of
$\varphi_{p,\kappa}$ (in particular~\eqref{eq:equi3}) it follows that
$\bF^\kappa(\bD\bu_\kappa)$ is uniformly bounded in $W^{1,2}(\Omega)$, that $\bu_\kappa$
is uniformly bounded in $W^{1,p}_0(\Omega)$ and that $\bS^\kappa(\bD\bu_\kappa)$ is
uniformly bounded in $L^{p'}(\Omega)$. Thus, there exist $\bA \in W^{1,2}(\Omega)$,
$\bu\in W^{1,p}_0(\Omega)$, $\boldsymbol \chi \in L^{p'}(\Omega)$, and a subsequence
$\{\kappa_n\}$, with $\kappa_n\to0$, such that
  \begin{equation*}
    \begin{aligned}
      \bF(\bD\bu_{\kappa_n}) &\rightharpoonup \bA&&\text{in
      }W^{1,2}(\Omega)\,,
      \\
      \bF^{\kappa_n}(\bD\bu_{\kappa_n})&\to \bA\quad&&\text{in
      }L^{2}(\Omega) \text{ and a.e.~in }\Omega\,,
      \\
      \bu_{\kappa_n}&\rightharpoonup \bu&&\text{in }
      W^{1,p}_0(\Omega)\,,
      \\
      \bS^\kappa(\bD\bu_\kappa)&\rightharpoonup \boldsymbol \chi &&\text{in } L^{p'}(\Omega)\,.
    \end{aligned}
  \end{equation*}
Setting $\bB:=( \bF^0)^{-1}(\bA)$, it follows from
\cite[Lemma 3.23]{bdr-7-5}  that
\begin{equation*}
  \bD\bu_{\letter_n}=(\bF^{\letter_n})^{-1}(\bF^{\letter_n}(\bD\bu_{\letter_n})
  )\to( \bF^0)^{-1}(\bA)=\bB\quad\text{ a.e.~in } \Omega.  
\end{equation*}
Since weak and a.e.~limit coincide we obtain that
\begin{equation*}
  \bD\bu_{\letter_n} \to \bD\bu=\bB \qquad \text{ a.e.~in }  \Omega\,. 
\end{equation*}
From~\cite[Lemma 3.16]{bdr-7-5} and~\cite[Corollary 3.22]{bdr-7-5} it now follows that
\begin{align*}
  \begin{aligned}
    \bF(\bD\bu_{\kappa_n}) &\rightharpoonup \bF^0(\bD\bu)&&\text{in
    }W^{1,2}(\Omega)\,,
    \\
    \bS^{\letter_n}(\bD\bu_{\letter_n}) &\to \bS(\bD\bu)
    &&\text{a.e.~in } \Omega\,.
  \end{aligned}
\end{align*}
Since weak and a.e.~limit coincide we obtain that
\begin{equation*}
   \bD\bu=\boldsymbol \chi \qquad \text{ a.e.~in }  \Omega\,. 
\end{equation*}
Now we can finish the proof in the same way as in the case
$\delta>0$. 
\end{proof}
\section*{Acknowledgments}
The research that led to the present paper was partially supported by a grant of the group
GNAMPA of INdAM.
\def\ocirc#1{\ifmmode\setbox0=\hbox{$#1$}\dimen0=\ht0 \advance\dimen0
  by1pt\rlap{\hbox to\wd0{\hss\raise\dimen0
  \hbox{\hskip.2em$\scriptscriptstyle\circ$}\hss}}#1\else {\accent"17 #1}\fi}
  \def\cprime{$'$} \def\cprime{$'$}
  \def\ocirc#1{\ifmmode\setbox0=\hbox{$#1$}\dimen0=\ht0 \advance\dimen0
  by1pt\rlap{\hbox to\wd0{\hss\raise\dimen0
  \hbox{\hskip.2em$\scriptscriptstyle\circ$}\hss}}#1\else {\accent"17 #1}\fi}
  \def\ocirc#1{\ifmmode\setbox0=\hbox{$#1$}\dimen0=\ht0 \advance\dimen0
  by1pt\rlap{\hbox to\wd0{\hss\raise\dimen0
  \hbox{\hskip.2em$\scriptscriptstyle\circ$}\hss}}#1\else {\accent"17 #1}\fi}
  \def\cprime{$'$} \def\cprime{$'$} \def\cprime{$'$} \def\cprime{$'$}
  \def\cprime{$'$} \def\ocirc#1{\ifmmode\setbox0=\hbox{$#1$}\dimen0=\ht0
  \advance\dimen0 by1pt\rlap{\hbox to\wd0{\hss\raise\dimen0
  \hbox{\hskip.2em$\scriptscriptstyle\circ$}\hss}}#1\else {\accent"17 #1}\fi}
  \def\cprime{$'$} \def\polhk#1{\setbox0=\hbox{#1}{\ooalign{\hidewidth
  \lower1.5ex\hbox{`}\hidewidth\crcr\unhbox0}}}

\end{document}